\newcommand{\defn}[1]{{\color{darkred}\emph{#1}}} 
\definecolor{darkblue}{rgb}{0.0,0,0.7} 
\definecolor{darkred}{rgb}{0.7,0,0} 
\definecolor{darkgreen}{rgb}{0, .6, 0} 
\newcommand{\ZZ}{\mathbb{Z}}
\newcommand\BFT{\mathsf{BFT}}
\newcommand\EXQ{\mathsf{EXQ}}
\newcommand\ml{{\mu/\lambda}}
\newcommand{\HVT}{\mathsf{HVT}}
\newcommand{\FMT}{\mathsf{FMT}}
\newcommand{\SSYT}{\mathsf{SSYT}}
\newcommand{\shape}{\mathsf{shape}}
\newcommand{\wt}{\mathsf{wt}}
\newcommand{\U}{\mathcal{U}}
\newcommand{\A}{\mathcal{A}}
\renewcommand{\L}{\mathcal{L}}
\newcommand{\Abump}{\mathcal{A}_{b}}
\newcommand{\Lbump}{\mathcal{L}_{b}}
\newcommand{\shuff}{\mathsf{shuff}}
\newcommand{\jdt}{\mathsf{jdt}_{\mathrm{GG}}}
\newtheorem{theorem}{Theorem}[section]
\newtheorem{lemma}[theorem]{Lemma}
\newtheorem{corollary}[theorem]{Corollary}
\newtheorem{proposition}[theorem]{Proposition}
\theoremstyle{definition}
\newtheorem{definition}[theorem]{Definition}
\newtheorem{example}[theorem]{Example}
\newtheorem{remark}[theorem]{Remark}
\newtheorem{problem}[theorem]{Problem}
\numberwithin{equation}{section}
\title{Hook-valued tableau uncrowding and tableau switching}
\author[J.~Jang]{Jihyeug Jang}
\address[J. Jang]{Department of Mathematics,
Sungkyunkwan University (SKKU), Suwon, Gyeonggi-do 16419, South Korea}
\email{jihyeugjang@gmail.com}
\author[J.~S.~Kim]{Jang Soo Kim}
\address[J.~S.~Kim]{Department of Mathematics,
Sungkyunkwan University (SKKU), Suwon, Gyeonggi-do 16419, South Korea}
\email{jangsookim@skku.edu}
\author[J.~Pan]{Jianping Pan}
\address[J. Pan]{Department of Mathematics, North Carolina State University, Raleigh, NC 27695-8205, U.S.A.\\
Current address: School of Mathematical and Statistical Sciences, Arizona State University,
Tempe, AZ 85287-1804, U.S.A.}
\email{jianping.pan@asu.edu}
\author[J.~Pappe]{Joseph Pappe}
\address[J. Pappe]{Department of Mathematics, Colorado State University, 1874 Campus Delivery, Fort Collins, CO 80523-1874,U.S.A.}
\email{joseph.pappe@colostate.edu}
\author[A.~Schilling]{Anne Schilling}
\address[A. Schilling]{Department of Mathematics, UC Davis, One Shields Ave., Davis, CA 95616-8633, U.S.A.}
\email{aschilling@ucdavis.edu}
\date{\today}
\keywords{Grothendieck polynomials, hook-valued tableaux, flagged tableaux, uncrowding algorithm, tableau switching, jeu de taquin}
\subjclass[2020]{Primary 05E05, 05A19; Secondary 05E10, 14N10, 14N15}
\date{\today}
\begin{document}

\begin{abstract}
  Refined canonical stable Grothendieck polynomials were introduced by
  Hwang, Jang, Kim, Song, and Song. There exist two combinatorial
  models for these polynomials: one using hook-valued tableaux and
  the other using pairs of a semistandard Young tableau and (what we
  call) an exquisite tableau. An uncrowding algorithm on hook-valued
  tableaux was introduced by Pan, Pappe, Poh, and Schilling. In this
  paper, we discover a novel connection between the two models via the
  uncrowding and Goulden--Greene's jeu de taquin algorithms, using a classical result of
  Benkart, Sottile, and Stroomer on tableau switching. This
  connection reveals a symmetry of the uncrowding algorithm
  defined on hook-valued tableaux. As a corollary, we obtain another
  combinatorial model for the refined canonical stable Grothendieck
  polynomials in terms of biflagged tableaux, which naturally appear 
  in the characterization of the image of the uncrowding map.
\end{abstract}

\maketitle 

\section{Introduction}
\label{section.intro}

Refined canonical stable Grothendieck polynomials were introduced in~\cite{HJKSS1}, generalizing and unifying many of the previous variants
of Grothendieck polynomials. They encompass the symmetric Grothendieck polynomials introduced by Lascoux and Sch\"utzenberger~\cite{LS.1982}, 
the symmetric $\beta$-Grothendieck polynomials of Fomin and Kirillov~\cite{FK.1996}, 
the canonical stable Grothendieck polynomials of 
Yeliussizov~\cite{Yeliussizov.2017}, 
and the refined stable Grothendieck polynomials of Chan and Pflueger~\cite{CP.2021}.
The refined canonical stable Grothendieck polynomials were further studied by Iwao, Motegi, and Scrimshaw \cite{Iwao2024}
in terms of vertex operators.
We note that dual Grothendieck polynomials and generalizations have also been studied;
 see \cite{LP.2007, GGL2016, HJKSS1} and references therein.

In analogy to Schur functions, which are generating functions of semistandard Young tableaux, Buch~\cite{Buch.2002} showed that 
symmetric Grothendieck polynomials are generating functions of set-valued tableaux. Each cell in a set-valued tableau contains
a set instead of just a natural number. Combinatorial models for the (refined) canonical stable Grothendieck polynomials are described in terms of hook-valued 
tableaux~\cite{Yeliussizov.2017,HJKSS2}, which contain a semistandard Young tableau of a hook shape in each cell. These tableau
models are intimately related to the monomial expansions of the different versions of the Grothendieck polynomials.

An important question is to find the Schur expansion of the various symmetric Grothendieck polynomials.
Lenart~\cite{Lenart.2000} gave the Schur expansion of the symmetric Grothendieck polynomials, whose monomial expansion
is given in terms of set-valued tableaux. Buch~\cite[Theorem 6.11]{Buch.2002} developed an uncrowding algorithm on set-valued
tableaux to give a bijective proof of Lenart's Schur expansion.
The uncrowding algorithm on a set-valued tableau produces a pair of a semistandard Young tableau and a tableau with certain flagged conditions,
using the RSK bumping algorithm to uncrowd cells that contain more than one integer; see \cite{LP.2007, BM.2012, Pat.2016}.
This uncrowding algorithm was used by Reiner, Tenner and Yong~\cite{RTY.2018}, and generalized by Chan and Pflueger~\cite{CP.2021}
and by Pan, Pappe, Poh, and Schilling~\cite{PPPS}.

Hwang et al. \cite{HJKSS1} found the Schur expansion for refined canonical Grothendieck polynomials,
which we rephrase in terms of ``exquisite'' tableaux. 
Hence, it is a natural question to relate the combinatorial model for the refined canonical Grothendieck polynomials in terms of hook-valued tableaux
with the combinatorial model in terms of exquisite tableaux by giving a bijection between hook-valued tableaux and pairs of a semistandard tableau and
an exquisite tableau. In this paper, we find such a bijection (see Theorem~\ref{thm:2bij}) using two types of uncrowding maps by combining the 
uncrowding algorithm due to Pan et al.~\cite{PPPS} with Goulden--Greene's jeu de taquin~\cite{GG}.

The uncrowding algorithm on hook-valued tableaux in~\cite{PPPS} uncrowds
the entries in the arms of the hooks and yields a set-valued tableau and a tableau with certain flagged conditions.
Subsequently applying the uncrowding
algorithm by Buch~\cite{Buch.2002} on the set-valued tableau yields a semistandard Young tableau and a recording tableau. It was proved
in~\cite{PPPS} that this uncrowding operator intertwines with the crystal operators of Hawkes and Scrimshaw~\cite{HS}.
Let us denote this sequence of uncrowding operations by $\U_{\L^\infty \A^\infty}$, which indicates that first arm and then leg uncrowding is performed.
In this paper, we also consider other orderings of leg and arm uncrowding, in particular $\U_{\A^\infty \L^\infty}$ which first performs leg and then arm
uncrowding. We relate the two orderings using tableau switching in the sense of Benkart, Sottile and Stroomer~\cite{BSS}.
This yields a characterization of the recording tableaux under the uncrowding algorithm in terms of biflagged tableaux (see Corollary~\ref{corollary.image}).
To connect to the combinatorial model of  \cite{HJKSS1} in terms of exquisite tableaux we use the jeu de taquin algorithm due to Goulden and 
Greene~\cite{GG}, which we call GG-jdt. This map
was further studied by Krattenthaler \cite{Kra}.
We show that  GG-jdt is a partial tableau switching procedure (see Proposition~\ref{prop.perforated}).
This yields the bijection between the combinatorial models for the refined canonical stable Grothendieck polynomials (see Theorem~\ref{thm:2bij}).
Our proof reveals a symmetry of the uncrowding algorithm on hook-valued tableaux
when interchanging the order of arm and leg uncrowding (see Theorem~\ref{thm.shuff.q}). A corollary of these results is the equivalence of
three combinatorial models for the refined canonical stable Grothendieck polynomials (see Corollary~\ref{corollary.grothendieck}).

The various tableaux used in this paper are summarized in Table~\ref{table.tableaux}.
\begin{table}
\begin{tabular}{|l|c|l|}
\hline
Tableau & Symbol & Appearance\\
\hline \hline
hook-valued tableaux & \(\HVT\) & \Cref{def:HVT}\\
mixed tableaux & n/a & \Cref{def:MT}\\
flagged-mixed tableaux & \(\FMT\) & \Cref{def:FMT}\\
exquisite tableaux & \(\EXQ\) & \Cref{def:exq}\\
biflagged tableaux & \(\BFT\) & \Cref{def:BFT}\\
\hline
\end{tabular}
\caption{Various tableaux used in this paper.}
\label{table.tableaux}
\end{table}

The paper is organized as follows. In Section~\ref{section.background}, we review the refined canonical Grothendieck polynomials
and exquisite tableaux. In Section~\ref{section.uncrowding}, we review and generalize the uncrowding algorithm of~\cite{PPPS}
and introduce the tableau switching procedure of~\cite{BSS} in our notation.
In Section~\ref{section.proof}, we present our results and proofs.
In Section~\ref{subsec.armleg}, we show that interchanging arm and leg uncrowding can
be interpreted in terms of tableau switching on the recording tableaux. 
In Section~\ref{subsec.ggjdt}, we show that GG-jdt can be formulated as a partial tableau switching procedure.
In Section~\ref{subsec.image}, we present a bijection between hook-valued tableaux
and pairs of a semistandard Young tableau and an exquisite tableau.
In doing so, we also show that GG-jdt is a bijection between biflagged tableaux and exquisite tableaux.
As corollaries, we obtain the equivalence of three combinatorial models for refined Grothendieck polynomials
and a characterization of the image of the uncrowding algorithm.
Finally, in \Cref{sec:further-study}, we propose some open problems.

\section*{Acknowledgments}
The authors acknowledge the inspiring atmosphere at FPSAC 2023 in Davis, where this project started. 
J. Pan would like to thank the Association for Women in Mathematics for providing support to travel to FPSAC 2023. 
The authors also thank Maria Gillespie for discussions on tableau switching
and Travis Scrimshaw for helpful comments.
They would also like to thank the anonymous referee for careful reading and useful comments.
The workshop ``Advances in Lie Theory, Representation
Theory, and Combinatorics'' held at SLMath in May 2024 inspired by the work of Georgia Benkart gave us the idea to use tableau
switching~\cite{BSS}.

J.~S.~Kim was supported by the National Research Foundation of Korea (NRF) grant funded by the Korea government RS-2025-00557835.
A. Schilling was partially supported by NSF grant DMS--2053350.

\section{Preliminaries}
\label{section.background}

In this section, we review the refined canonical stable Grothendieck polynomials and exquisite tableaux~\cite{HJKSS1}.
We use French notation for tableaux throughout the paper. We
refer the reader to \cite[Chapter~7]{EC2} for basic notions such as
partitions, Young diagrams, skew shapes, semistandard Young tableaux,
and Schur functions \( s_\lambda(\bm{x}) \), where
$\bm{x} = (x_1,x_2,\dots)$ is an infinite set of indeterminants.

\subsection{Refined canonical stable Grothendieck polynomials}
\label{subsec.grothendieck}

Yeliussizov~\cite{Yeliussizov.2017} introduced the \defn{canonical stable Grothendieck polynomial} $G^{(\alpha,\beta)}_\lambda(\bm{x})$
indexed by a partition $\lambda$ and two parameters $\alpha$ and $\beta$. It is a formal power series generalizing the 
\defn{symmetric \( \beta \)-Grothendieck polynomial} $G^{(\beta)}_\lambda(\bm{x})$ with the property
\[
\omega(G^{(\alpha,\beta)}_\lambda(\bm{x})) = G^{(\beta,\alpha)}_{\lambda'}
(\bm{x}),
\]
where $\omega$ is the involution that sends the Schur function
$s_{\lambda}(\bm{x})$ to $s_{\lambda'}(\bm{x})$ indexed by the transpose partition $\lambda'$. 

Hwang, Jang, Kim, Song, and Song \cite{HJKSS1} introduced the
\defn{refined canonical stable Grothendieck polynomials}
$G_\lambda(\bm{x}, \bm{\alpha},\bm{\beta})$, which are refinements of
$G^{(\alpha,\beta)}_\lambda(\bm{x})$ with infinite sets of parameters
\( \bm{\alpha}= (\alpha_1,\alpha_2,\dots) \) and
\( \bm{\beta}= (\beta_1,\beta_2,\dots) \). They are defined by
$G_\lambda(\bm{x}, \bm{\alpha},\bm{\beta}) = \lim_{n\to
  \infty}G_\lambda(\bm{x}_n, \bm{\alpha},\bm{\beta})$,
where $\bm{x}_n = (x_1,\dots,x_n)$ and
\[
  G_\lambda(\bm{x}_n, \bm{\alpha},\bm{\beta}) 
  = \dfrac{\det\left(\frac{(1+\beta_1x_j)\cdots (1+\beta_{i-1}x_j)}{(1-\alpha_1x_j)\cdots 
  (1-\alpha_{\lambda_i}x_j)} \right)_{1\leqslant i,j\leqslant n}}{\prod_{1\leqslant i < j \leqslant n}(x_i-x_j)}.
\]
Here, we replaced every \( \beta_i \) with \( -\beta_i \) in the
original definition \cite[Definition~1.1]{HJKSS1} in order to make
\( G_\lambda(\bm{x}, \bm{\alpha},\bm{\beta}) \) a formal power
series with positive coefficients. We
can set $\alpha_i = \alpha$ and $\beta_i = \beta$, for all $i$, in the
refined version $G_\lambda(\bm{x}, \bm{\alpha},\bm{\beta})$ to get the
original version $G^{(\alpha,\beta)}_\lambda(\bm{x})$.

Combinatorially, both $G^{(\alpha,\beta)}_\lambda(\bm{x})$ and
$G_\lambda(\bm{x}, \bm{\alpha},\bm{\beta})$ are the generating
functions for hook-valued tableaux~\cite{Yeliussizov.2017,HJKSS2}. 

\begin{definition}\label{def:HVT}
A \defn{hook-valued tableau} is a filling of a partition shape satisfying the following conditions:
\begin{enumerate}
  \item Each box contains a semistandard Young tableau of hook shape, i.e.,
  \[\begin{ytableau}
    L_\ell\\
    \vdots\\
    L_1\\
    h & A_1 & \cdots & A_k
  \end{ytableau}\]
  where $h<L_1<\dots<L_\ell$ and $h\leqslant A_1 \leqslant \dots \leqslant A_k$ are positive integers. The entry $h$ is called the \defn{hook entry}, the $L_i$'s the \defn{leg entries} and the $A_i$'s the \defn{arm entries}. 
  \item Each row is weakly increasing, i.e., any entry in a box is weakly smaller than any entry in the box directly to the right of it.
  \item Each column is strictly increasing, i.e., any entry in a box is strictly smaller than any entry in the box directly above it.
\end{enumerate}
We denote by $\HVT(\lambda)$ the set of hook-valued tableaux of shape
$\lambda$. We also write $\HVT$ for the set of hook-valued tableaux of
any shape. The \defn{weight} \( \wt(T) \) of a hook-valued tableau $T$
is defined by
\[
  \wt(T) = \prod_{i\geqslant 1}
  \alpha_i^{(\# \text{ of arm entries in column } i)}
  \beta_i^{(\# \text{ of leg entries in row } i)}
  x_i^{(\# \text{ of $i$'s in } T)}.
\]
\end{definition}

\begin{example}
  The tableau $T_1$ is a hook-valued tableau of shape $(3,2)$, and
  $T_2$ is not a hook-valued tableau because the first row of \( T_2 \) is not weakly
  increasing (the first column of \( T_2 \) is not strictly increasing either):
\[{\def\mc#1{\makecell[lb]{#1}}
T_1 =\,{\begin{array}[lb]{*{3}{|l}|}\cline{1-2}
\mc{6\\4\\335}&\mc{67}\\\cline{1-3}
\mc{2\\11}&\mc{4\\334}&\mc{9\\445}\\\cline{1-3}
\end{array}\,,\qquad}
T_2 =\,{\begin{array}[lb]{*{4}{|l}|}\cline{1-2}
\mc{7\\445}&\mc{789}\\\cline{1-4}
\mc{4\\3\\122}&\mc{5\\345}&\mc{8\\7\\567}&\mc{7}\\\cline{1-4}
\end{array}\,.}
}
\]
The weight of \( T_1 \) is
\[
\wt(T_1) = \alpha_1^3\alpha_2^3\alpha_3^2 \beta_1^3\beta_2^2 x_1^2x_2x_3^4x_4^5x_5^2x_6^2x_7x_9.
\]
\end{example}

\begin{remark}
When all arm entries are empty, a hook-valued tableau is also called a \defn{set-valued tableau}.
\end{remark}

\begin{theorem}\label{thm:HVT} \cite{HJKSS2}
  We have
\[
  G_\lambda(\bm{x}, \bm{\alpha},\bm{\beta}) = \sum_{T\in \HVT(\lambda)} \wt(T).
\]
\end{theorem}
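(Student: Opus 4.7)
The plan is to prove this identity via a Lindström--Gessel--Viennot (LGV) lattice-path argument, in analogy with the bialternant proof of the Jacobi--Trudi identity for Schur functions. First, expand the $(i,j)$ matrix entry as
\[
\frac{\prod_{k=1}^{i-1}(1+\beta_k x_j)}{\prod_{k=1}^{\lambda_i}(1-\alpha_k x_j)}
  = \sum_{L,A}\Bigl(\prod_{k\in L}\beta_k\Bigr)\Bigl(\prod_c \alpha_c^{m_c(A)}\Bigr)x_j^{|L|+|A|},
\]
where $L\subseteq\{1,\ldots,i-1\}$ is a strictly increasing subset (the admissible \emph{leg labels} for a hook in row $i$, since leg entries exceed the hook entry and the column-strict condition bounds them by $i-1$) and $A$ is a weakly increasing multisequence in $\{1,\ldots,\lambda_i\}$ with multiplicities $m_c(A)$ (the admissible \emph{arm labels} for cells in the first $\lambda_i$ columns). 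Each term is then naturally interpreted as a weighted lattice path from a source $P_i$ to a sink $Q_j$, with north steps labeled by leg indices and east steps by arm indices.

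Next, apply LGV to the matrix $(F_i(x_j))_{i,j=1}^n$ and divide by the Vandermonde to collapse the ratio into a sum over non-intersecting $n$-tuples of paths. The key combinatorial step is then a weight-preserving bijection between such tuples and $\HVT(\lambda)$: path $\pi_i$ is cut into $\lambda_i$ consecutive segments, with the $c$-th segment encoding the hook occupying cell $(i,c)$; its east-step labels produce the (weakly increasing) arm entries, its north-step labels produce the (strictly increasing) leg entries, and segment boundary heights record the hook entries of the underlying SSYT. The non-intersecting condition on the path family translates precisely to the row-weakly-increasing and column-strictly-increasing conditions on the HVT between neighboring cells.

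The main obstacle will be aligning the weights. The exponent $|L|+|A|$ of $x_j$ in the expansion counts only arm and leg steps, so the $x_i^{\#i\text{'s in }T}$ contributions from hook entries must come from a compensating $x_j$-shift absorbed into the determinant (mirroring the $x_j^{\lambda_i+n-i}$ shift in the classical Schur bialternant). With this shift in place, the LGV cancellation and the segment-decomposition bijection combine to give the identity, with the weight bookkeeping---east steps in column $c$ supplying $\alpha_c$, north steps in row $i$ supplying $\beta_i$, and total path length supplying the $x_j$-power---becoming routine. As an alternative route, one could instead show that both sides satisfy the same Pieri-type branching rule obtained by adjoining a column to $\lambda$, or derive the identity as a consequence of a generalized Cauchy identity for HVT generating functions.
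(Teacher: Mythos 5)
The paper does not give a proof of this theorem; it cites the result from \cite{HJKSS2}, so there is no internal argument to compare against. Your proposal is therefore offering a self-contained derivation, and the high-level strategy (expand the bialternant's matrix entries, interpret terms as weighted lattice paths, invoke LGV, and biject non-intersecting path families with hook-valued tableaux) is a plausible route and is indeed in the spirit of how such determinantal identities for Grothendieck-type functions are usually established. You are also right that the displayed formula as written in this paper is missing the factor \( x_j^{\lambda_i+n-i} \) in the matrix entry, without which it degenerates (for instance, for \(\lambda=\emptyset\) and \(n\geqslant 2\) the determinant of an all-ones matrix vanishes).

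However, the sketch defers precisely the hard combinatorics and should not be accepted as a proof. The most serious gap is the \(\beta\)-weight bookkeeping. Expanding the \((i,j)\) entry produces factors \(\beta_k\) only for \(k\in\{1,\dots,i-1\}\), whereas in \(\wt(T)\) a leg entry sitting in row \(r\) of \(T\) always contributes \(\beta_r\), with no constraint tying that row to a smaller matrix-row index. Your phrase ``north steps in row \(i\) supplying \(\beta_i\)'' simply does not match the algebra: a north step read off from matrix row \(i\) supplies some \(\beta_k\) with \(k<i\). Bridging this discrepancy is essentially the content of the main theorems of this paper (the \(\Phi\)-bijection passing through exquisite/biflagged tableaux and GG-jdt), so calling it ``routine'' is not justified. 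A second gap is the proposed bijection itself: cutting path \(\pi_i\) into \(\lambda_i\) segments does not obviously encode the full hook structure in each cell — a hook entry satisfying \(h\leqslant A_1\) and \(h<L_1\), weakly increasing arms, strictly increasing legs, and the inter-cell row/column monotonicity must all fall out of the non-intersecting condition, and this is nontrivial to verify. As written, the proposal is a plan with the central difficulties acknowledged but unresolved, not a proof.
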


\subsection{Various mixed tableaux}

In this subsection, we introduce basic definitions that will be used
throughout this paper.

\begin{definition}\label{def:MT}
A \defn{mixed tableau} of shape \( \ml \) is a filling \( T \) of the
cells of \( \ml \) with elements in
\( \{\alpha_k \mid k\in \ZZ_{>0}\}\cup \{\beta_k \mid k\in \ZZ\} \).
The \defn{weight} \( \wt(T) \) of a mixed tableau \( T \) is the
product of its entries.
\end{definition}

\begin{definition}\label{def:FMT}
A \defn{flagged-mixed tableau} is a mixed tableau satisfying the
following conditions:
  \begin{enumerate}
  \item If \( T(i,j) = \alpha_k \), then \( 0<k<j \).
  \item If \( T(i,j) = \beta_k \), then \( 0<k<i \).
  \end{enumerate}
  Here, $T(i,j)$ denotes the entry in the $i$\textsuperscript{th} row and $j$\textsuperscript{th} column.
  We denote by \( \FMT \) the set of all flagged-mixed tableaux.
\end{definition}

\begin{definition}\label{def:strict}
  Let \( T \) be a mixed tableau. For
  \( \gamma\in \{\alpha,\beta\} \), we say that \( T \) is
  \defn{\( \gamma \)-column-strict}
  (resp.~\defn{\( \gamma \)-row-strict}) if the following
  conditions hold:
  \begin{enumerate}
  \item If \(\gamma_i\) and \(\gamma_j\) are entries in \(T\) such that
    \( \gamma_i \) is weakly southwest of \( \gamma_j \), then
    \( i\geqslant j \).
  \item If \(\gamma_i\) and \(\gamma_j\) are entries in \(T\) in the same
    column (resp.~row), then \( i\ne j \).
\end{enumerate}

We also say that \( T \) is \defn{totally column-strict} if the following
conditions hold:
\begin{enumerate} 
\item All indices of \( \alpha \) and \( \beta \) in each column are strictly decreasing.
  More precisely, if \( T(i,j)=\gamma_r \) and
  \( T(i+1,j)=\delta_s \) with
  \( \gamma,\delta\in \{\alpha,\beta\} \), then \( r> s \).
\item All indices of \( \alpha \) and \( \beta \) in each row are
  weakly decreasing. More precisely, if \( T(i,j)=\gamma_r \) and
  \( T(i,j+1)=\delta_s \) with
  \( \gamma,\delta\in \{\alpha,\beta\} \), then \( r\geqslant s \).
\end{enumerate}
\end{definition}

\begin{definition}\label{def:sorted}
  Let \( T \) be a mixed tableau of shape \( \ml \). Let \( A \)
  (resp.~\( B \)) be the set of cells in \( T \) containing
  \( \alpha_k \) (resp.~\( \beta_k \)) for any \( k\in \ZZ \). We say
  that \( T \) is \defn{\( (\alpha,\beta) \)-sorted} if \( A \) and
  \( B \) form skew shapes \( \nu/\lambda \) and \( \mu/\nu \),
  respectively, for some partition \( \nu \) with
  \( \lambda\subseteq\nu\subseteq\mu \).
  Similarly, we say that \( T \) is \defn{\( (\beta,\alpha) \)-sorted} if \( B \) and
  \( A \) form skew shapes \( \nu/\lambda \) and \( \mu/\nu \),
  respectively, for some partition \( \nu \) with
  \( \lambda\subseteq\nu\subseteq\mu \).
\end{definition}

\subsection{Exquisite tableaux}
\label{subsec.exquisite}

Hwang et al.~\cite{HJKSS1} provided a weighted
combinatorial model for the Schur expansion of
$G_\lambda(\bm{x}, \bm{\alpha},\bm{\beta})$ in terms of
$\ZZ$-inelegant tableaux. We rephrase their Schur expansion formula
using exquisite tableaux.

The \defn{content} \( c(i,j) \) of the cell \( (i,j) \) is defined by
$c(i,j) = j-i$.

\begin{definition}\label{def:c+-}
  Let \( T \) be a mixed tableau. We define \( c_\beta^+(T) \)
  (resp.~\( c_\beta^-(T) \)) to be the tableau obtained from \( T \)
  by replacing every \( \beta_r \) by \( \beta_{r+c} \)
  (resp.~\( \beta_{r-c} \)), where \( c \) is the content of the cell
  containing \( \beta_r \).
\end{definition}

\begin{definition}\label{def:exq}
 An \defn{exquisite tableau} is a flagged-mixed tableau $E$ such that
  \( c_\beta^+(E) \) totally column-strict. Let \(\EXQ(\ml)\) denote
  the set of all exquisite tableaux of shape \(\ml\).
\end{definition}

\begin{example}\label{exa:1}
Let \(\lambda = (2,1)\) and \(\mu = (3,3,1)\). Then
  \[
\EXQ(\ml) = \left\{
\raisebox{1em}{\begin{ytableau}
\beta_2 \\
*(gray) & \beta_1 & \alpha_1 \\
*(gray) & *(gray) & \alpha_2
\end{ytableau}}\,,
\raisebox{1em}{\begin{ytableau}
\beta_1 \\
*(gray) & \beta_1 & \alpha_1 \\
*(gray) & *(gray) & \alpha_2
\end{ytableau}}\,,
\raisebox{1em}{\begin{ytableau}
\beta_2 \\
*(gray) & \alpha_1 & \alpha_1 \\
*(gray) & *(gray) & \alpha_2
\end{ytableau}}\,,
\raisebox{1em}{\begin{ytableau}
\beta_1 \\
*(gray) & \alpha_1 & \alpha_1 \\
*(gray) & *(gray) & \alpha_2
\end{ytableau}}
\right\}\,.
\]
To see this, note that if \( T\in \EXQ(\ml) \), then \( T(1,3) \) must
be \( \alpha_r \) for some \( 0<r<3 \) because if
\( T(1,3)=\beta_k \), then \( 0<k<1 \), which is impossible. By the
condition that \( c_\beta^+(T) \) is totally column-strict, we must
have \( T(1,3)=\alpha_2 \). If \( T(2,3)=\beta_k \), then
\( k+c(2,3)=k+1<2 \), which is impossible. Hence,
\( T(2,3)=\alpha_1 \). Observe that
\( T(2,2)\in \{\alpha_1,\beta_1\} \) and
\( T(3,1)\in \{\beta_1,\beta_2\} \) can be chosen freely. Thus, there
are four exquisite tableaux as shown above.
\end{example}

Hwang et al.~\cite{HJKSS1} showed that if we
expand \( G_\lambda(\bm{x}, \bm{\alpha},\bm{\beta}) \) as a linear
combination of Schur functions, the coefficients are generating
functions for ``$\ZZ$-inelegant tableaux''.
We rephrase their result using exquisite tableaux as follows.

\begin{theorem}\cite[Corollary~4.5]{HJKSS1}
\label{thm.exq}
We have
\[
  G_\lambda(\bm{x}, \bm{\alpha},\bm{\beta}) = \sum_{\mu \supseteq \lambda} s_\mu(\bm{x}) \sum_{E \in \EXQ(\ml)} \wt(E).
\]
\end{theorem}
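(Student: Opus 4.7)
The plan is to reduce Theorem~\ref{thm.exq} to \cite[Corollary~4.5]{HJKSS1} by exhibiting a weight-preserving bijection between \(\EXQ(\ml)\) and the set of \(\ZZ\)-inelegant tableaux of shape \(\ml\) used in that corollary. The authors explicitly describe the present statement as a rephrasing of their earlier result, so the content of the proof lies entirely in translating between the two conventions. In the \(\ZZ\)-inelegant model a single integer filling encodes both the \(\bm\alpha\) and \(\bm\beta\) contributions, with positive entries corresponding to \(\alpha\)-weights and nonpositive entries to \(\beta\)-weights; the strictness condition on the \(\beta\)-part is phrased using the content of the cell, which is exactly the data that the map \(c_\beta^+\) builds into Definition~\ref{def:exq}.

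First, I would define a map \(\Phi\colon \EXQ(\ml) \to \{\ZZ\text{-inelegant tableaux of shape }\ml\}\) by leaving each \(\alpha_k\) unchanged and replacing each \(\beta_r\) at a cell \((i,j)\) by the integer \(-(r+c(i,j))\) (or the equivalent sign convention used in \cite{HJKSS1}). The flagged-mixed condition on an exquisite tableau \(E\), combined with total column-strictness of \(c_\beta^+(E)\), should translate term by term into the flag and monotonicity conditions defining a \(\ZZ\)-inelegant tableau: for the \(\alpha\)-cells the column flag \(0<k<j\) is already in the correct form, while for the \(\beta\)-cells the row flag \(0<r<i\) becomes the correct nonpositive range precisely because \(c_\beta^+\) adds the content to each \(\beta\)-index. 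Weight preservation is then immediate once one checks that the exponent of each \(\beta_k\) tracked by \(\wt(E)\) and the weight assigned in \cite{HJKSS1} agree after the content shift.

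To confirm bijectivity I would construct the inverse explicitly: given a \(\ZZ\)-inelegant tableau, split the cells by sign, keep positive entries as \(\alpha_k\), and for each nonpositive entry \(n\) in cell \((i,j)\) set the corresponding exquisite entry to \(\beta_{-n-c(i,j)}\), verifying that the flag conditions of Definition~\ref{def:FMT} and the \(c_\beta^+\)-strictness of Definition~\ref{def:exq} both hold. The principal obstacle will be checking strictness carefully at boundary cells where an \(\alpha\)-entry is adjacent to a \(\beta\)-entry in the same column or row: the single mixed strictness condition on the \(\ZZ\)-side must correspond under \(\Phi\) exactly to the totally column-strict condition applied to \(c_\beta^+(E)\), uniformly across the two alphabets. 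Once this translation is nailed down, Theorem~\ref{thm.exq} follows directly from \cite[Corollary~4.5]{HJKSS1}.
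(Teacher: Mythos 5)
The paper offers no independent proof of this statement: it cites \cite[Corollary~4.5]{HJKSS1} directly and only asserts that exquisite tableaux ``rephrase'' the $\ZZ$-inelegant tableaux of that reference, leaving the dictionary between the two models implicit. Your plan --- a weight-preserving bijection that fixes each $\alpha_k$ and sends $\beta_r$ in cell $(i,j)$ to the corresponding $\ZZ$-entry via the content shift that $c_\beta^+$ encodes, together with the sign flip noted in the paper's remark that every $\beta_i$ was replaced by $-\beta_i$ relative to \cite{HJKSS1} --- is exactly the translation the paper relies on, and spelling it out (including checking that the flag conditions of \Cref{def:FMT} and the total column-strictness of $c_\beta^+(E)$ correspond to the flag/monotonicity constraints on the $\ZZ$-side) is the correct way to justify the restatement. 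One small housekeeping point: the symbol $\Phi$ is already reserved for a different map in \Cref{def:Phi}, so rename your bijection to avoid a clash if this is incorporated into the text.
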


\section{Uncrowding algorithm and tableau switching}
\label{section.uncrowding}

In Section~\ref{subsec.uncrowding} we review and generalize the uncrowding algorithm in~\cite{PPPS}.
In Section~\ref{subsec.switching} we review the tableau switching procedure of~\cite{BSS}.

\subsection{Uncrowding algorithms for hook-valued tableaux}
\label{subsec.uncrowding}

In this subsection, we define the uncrowding algorithms for
hook-valued tableaux following~\cite{PPPS}. Note, however, that we use
a different convention for the recording tableau. We begin by defining
arm- and leg-uncrowding bumpings.

\begin{definition}
  The \defn{arm-uncrowding bumping} \( \A_b \colon \HVT\to \HVT \) is defined
  by the following algorithm:
  \begin{enumerate}
  \item Suppose \( T\in \HVT \) is given as an input.
  \item If \( T \) has no arms, then define \( \A_b(T) = T \). 
  \item From now on, suppose that \( T \) has arms. Find the rightmost
    column containing an arm. Let \( a \) be the largest arm entry in
    this column and let \( (r,c) \) be the cell containing it. If
    there are multiple arm entries equal to \( a \), then all of them
    must be in the same cell \( (r,c) \). In this case, we select the
    rightmost arm entry \( a \) in this cell.
  \item Find the smallest entry \( k \) in column \( c+1 \) with
    \( k\geqslant a \). There are two cases.   
   \begin{description}
   \item[Case 1] There exists such \( k \). Suppose that it is in
     \( (\widetilde{r},c+1) \). Remove the entry \( a \) in
     \( (r,c) \), replace the \( k \) in \( (\widetilde{r},c+1) \)
     with \( a \), and then add \( k \) as an arm in
     \( (\widetilde{r},c+1) \). If \( r=\widetilde{r} \), then in
     addition to this, we also remove all leg entries in \( (r,c) \) that are
     greater than \( a \) and add them to \( (r,c+1) \) as legs. 
   \item[Case 2] There is no such \( k \). Then create a new cell at
     the top of column \( c+1 \) and move the entry \( a \) in the
     cell \( (r,c) \) to the new cell. If \( (r,c+1) \) is the new
     cell, then we also remove all leg entries in \( (r,c) \) that are
     greater than \( a \) and add them to \( (r,c+1) \) as legs.
   \end{description}
 \item Finally, we define \( \A_b(T) \) to be the resulting tableau.
  \end{enumerate}
\end{definition}

\begin{example}\label{ex:Abump}
  The following shows hook-valued tableaux \( T, \Abump(T), \Abump^2(T), \) and
  \( \Abump^3(T) \):
\[{\def\mc#1{\makecell[lb]{#1}}
\,{\begin{array}[lb]{*{4}{|l}|}\cline{1-3}
\mc{2}&\mc{3\mathbf{4}}&\mc{9}\\\cline{1-4}
\mc{11}&\mc{2\\12}&\mc{6\\5\\3}&\mc{8\\7}\\\cline{1-4}
\end{array}\,},\quad
\,{\begin{array}[lb]{*{4}{|l}|}\cline{1-3}
\mc{2}&\mc{3}&\mc{9}\\\cline{1-4}
\mc{11}&\mc{2\\12}&\mc{6\\4\\3\mathbf{5}}&\mc{8\\7}\\\cline{1-4}
\end{array}\,},\quad
\,{\begin{array}[lb]{*{4}{|l}|}\cline{1-3}
\mc{2}&\mc{3}&\mc{9}\\\cline{1-4}
\mc{11}&\mc{2\\12}&\mc{4\\3}&\mc{8\\6\\5\mathbf{7}}\\\cline{1-4}
\end{array}\,},\quad
\,{\begin{array}[lb]{*{5}{|l}|}\cline{1-3}
\mc{2}&\mc{3}&\mc{9}\\\cline{1-5}
\mc{11}&\mc{2\\12}&\mc{4\\3}&\mc{\phantom{7}\\6\\5}&\mc{8\\7}\\\cline{1-5}
\end{array}\,}.
}
\]
For the first three tableaux, the largest arm entry in the rightmost column
containing an arm is indicated in boldface.
\end{example}

\begin{definition}
  The \defn{leg-uncrowding bumping} \( \L_b\colon \HVT\to \HVT \) is defined
  by the following algorithm.
  \begin{enumerate}
  \item Suppose \( T\in \HVT \) is given as an input.
  \item If \( T \) has no legs, then define \( \L_b(T) = T \). 
  \item From now on, suppose that \( T \) has legs. Find the topmost
    row containing a leg. Let \( \ell \) be the largest leg entry in this row
    and let \( (r,c) \) be the cell containing it.
  \item Find the smallest entry \( k \) in row \( r+1 \) with
    \( k> \ell \). There are two cases.   
    \begin{description}
   \item[Case 1] There exists such \( k \). Suppose that it is in
     \( (r+1,\widetilde{c}) \). Remove the entry \( \ell \) in
     \( (r,c) \), replace the \( k \) in \( (r+1,\widetilde{c}) \)
     with \( \ell \), and then add \( k \) as a leg in
     \( (r+1,\widetilde{c}) \). If \( c=\widetilde{c} \), then in
     addition to this, we also remove all arm entries in \( (r,c) \) that are
     at least \( \ell \) and add them to \( (r,c+1) \) as arms. 
   \item[Case 2] There is no such \( k \). Then create a new cell at
     the end of row \( r+1 \) and move the entry \( \ell \) in the
     cell \( (r,c) \) to the new cell. If \( (r+1,c) \) is the new
     cell, then we also remove all arm entries in \( (r,c) \) that are
     at least \( \ell \) and add them to \( (r+1,c) \) as arms.
   \end{description}
    \item Finally, we define \( \L_b(T) \) to be the resulting tableau.
  \end{enumerate}
\end{definition}

\begin{example}\label{ex:Lbump}
  The following shows hook-valued tableaux \( T, \Lbump(T) \), and
  \( \Lbump^2(T) \):
\[{\def\mc#1{\makecell[lb]{#1}}
\,{\begin{array}[lb]{*{3}{|l}|}\cline{1-1}
\mc{4}\\\cline{1-2}
\mc{33}&\mc{\phantom{1}\\56}\\\cline{1-3}
\mc{2\\1}&\mc{\mathbf{3}\\224}&\mc{45}\\\cline{1-3}
\end{array}\,},\quad
\,{\begin{array}[lb]{*{3}{|l}|}\cline{1-1}
\mc{4}\\\cline{1-2}
\mc{33}&\mc{\mathbf{5}\\346}\\\cline{1-3}
\mc{2\\1}&\mc{22}&\mc{45}\\\cline{1-3}
\end{array}\,},\quad
\,{\begin{array}[lb]{*{3}{|l}|}\cline{1-2}
\mc{4}&\mc{56}\\\cline{1-2}
\mc{33}&\mc{\phantom{5}\\34\phantom{6}}\\\cline{1-3}
\mc{2\\1}&\mc{22}&\mc{45}\\\cline{1-3}
\end{array}\,}.
}
\]
For the first two tableaux, the largest leg entry in the topmost row
containing a leg is written in boldface.
\end{example}

\begin{definition}
  The \defn{single-arm-uncrowding map} \( \A \colon \HVT\to \HVT \) is
  defined as follows. Let \( T\in \HVT \). If \( T \) has no arms, or
  equivalently, if \( \A_b(T) = T \), then define \( \A(T) = T \).
  Otherwise, we define \( \A(T) = \A_b^m(T) \), where \( m \) is the
  smallest integer such that the shape of \( \A_b^m(T) \) is larger
  than that of \( T \).
\end{definition}

\begin{definition}
  The \defn{single-leg-uncrowding map} \( \L \colon \HVT\to \HVT \) is
  defined as follows. Let \( T\in \HVT \). If \( T \) has no legs,
  then \( \L(T) = T \).
  Otherwise, we define \( \L(T) = \L_b^m(T) \), where \( m \) is the
  smallest integer such that the shape of \( \L_b^m(T) \) is larger
  than that of \( T \).
\end{definition}

For example, if \( T \) is the first tableau in \Cref{ex:Abump},
then \( \A(T) = \Abump^3(T) \), which is the last tableau
there. If \( T \) is the first tableau in \Cref{ex:Lbump}, then
\( \L(T) = \Lbump^2(T) \), which is the last tableau there.

We are now ready to define the uncrowding map.

\begin{definition}\label{defn.uncrowd}
  Let \( \U = \U_{f_n\cdots f_1} \), where \( f_n\cdots f_1 \) is a
  word in the alphabet \( \{\A,\L\} \).
  Then the \defn{uncrowding map}
  \( \U \colon \HVT\to \HVT\times \FMT \) is defined as follows.

  Let \( T\in \HVT \). We construct two tableaux \( P \) and
  \( Q \). For each \( i=0,1,2,\dots,n \), let
  \( T_i = f_i \circ \cdots \circ f_1 (T) \), where \( T_0=T \), and
  let \( \lambda^{(i)} \) be the shape of \( T_i \). First, we define
  \( P = T_n \). Now we define a flagged-mixed tableau \( Q \) of
  shape \( \lambda^{(n)}/\lambda^{(0)} \) as follows. For each
  \( i=1,2,\dots,n \), there are two cases.

  \begin{description}
  \item[Case 1] \( f_i = \A \). 
    In this case,
    \( T_{i} = \A(T_{i-1}) \). If \( T_{i-1} \) has arms, suppose
    that \( (r,c) \) is the cell that contains the largest arm entry in the
    rightmost column containing an arm. Then fill the unique cell
    \( \lambda^{(i)}/\lambda^{(i-1)} \) in \( Q \) with
    \( \alpha_c \). (If \( T_{i-1} \) has no arms, nothing happens in
    this case.)
  \item[Case 2] \( f_i = \L \). 
    In this case,
    \( T_{i} = \L(T_{i-1}) \). If \( T_{i-1} \) has legs, suppose
    that \( (r,c) \) is the cell that contains the largest leg entry in the
    topmost row containing a leg. Then fill the unique cell
    \( \lambda^{(i)}/\lambda^{(i-1)} \) in \( Q \) with \( \beta_r \).
    (If \( T_{i-1} \) has no legs, nothing happens in this case.)
  \end{description}
  
  Finally, we define \( \U(T) = (P,Q) \). We call \( P \) and \( Q \)
  the \defn{insertion tableau} and \defn{recording tableau} of
  \( \U(T) \), respectively. 
\end{definition}

\begin{example}\label{ex:uncrowding}
  Consider the following hook-valued tableaux
  \( T, \A(T), \A\circ\A(T), \L\circ\A\circ\A(T) \), and \( \L\circ \L\circ\A\circ\A(T) \):
\[{\def\mc#1{\makecell[lb]{#1}}
    \def\asep{\,\substack{\A\\ \rightarrow}\,\,}
    \def\lsep{\,\substack{\L\\ \rightarrow}\,\,}
\,{T=\begin{array}[lb]{*{4}{|l}|}\cline{1-1}
\mc{4}\\\cline{1-2}
\mc{3}&\mc{6\\57}\\\cline{1-2}
\mc{2}&\mc{24}\\\cline{1-4}
\mc{1}&\mc{1}&\mc{1}&\mc{5\\3}\\\cline{1-4}
\end{array}\,} \asep
\,{\begin{array}[lb]{*{4}{|l}|}\cline{1-1}
\mc{4}\\\cline{1-2}
\mc{3}&\mc{6\\5}\\\cline{1-3}
\mc{2}&\mc{24}&\mc{7}\\\cline{1-4}
\mc{1}&\mc{1}&\mc{1}&\mc{5\\3}\\\cline{1-4}
\end{array}\,} \asep
\,{\begin{array}[lb]{*{4}{|l}|}\cline{1-1}
\mc{4}\\\cline{1-2}
\mc{3}&\mc{6\\5}\\\cline{1-4}
\mc{2}&\mc{2}&\mc{4}&\mc{7}\\\cline{1-4}
\mc{1}&\mc{1}&\mc{1}&\mc{5\\3}\\\cline{1-4}
\end{array}\,} \lsep
\,{\begin{array}[lb]{*{4}{|l}|}\cline{1-2}
\mc{4}&\mc{6}\\\cline{1-2}
\mc{3}&\mc{\phantom{6}\\5}\\\cline{1-4}
\mc{2}&\mc{2}&\mc{4}&\mc{7}\\\cline{1-4}
\mc{1}&\mc{1}&\mc{1}&\mc{5\\3}\\\cline{1-4}
\end{array}\,} \lsep
\,{\begin{array}[lb]{*{4}{|l}|}\cline{1-2}
\mc{4}&\mc{6}\\\cline{1-3}
\mc{3}&\mc{\phantom{6}\\5}&\mc{7}\\\cline{1-4}
\mc{2}&\mc{2}&\mc{4}&\mc{5}\\\cline{1-4}
\mc{1}&\mc{1}&\mc{1}&\mc{\phantom{7}\\3}\\\cline{1-4}
\end{array}}
}\,.
\]
This shows that \( \U_{\L\L\A\A}(T) = (P,Q) \), where
\[
  P = \raisebox{2em}{
  \begin{ytableau}
4 & 6\\
3 & 5 & 7\\
2 & 2 & 4 & 5\\
1 & 1 & 1 & 3
\end{ytableau}\,}, \qquad 
Q = \raisebox{2em}{
\begin{ytableau}
*(gray) & \beta_3\\
*(gray) & *(gray) & \beta_1\\
*(gray) & *(gray) & \alpha_2 & \alpha_2\\
*(gray) & *(gray) & *(gray) & *(gray)
\end{ytableau}\,}.
\]
\end{example}

Note that if a hook-valued tableau \( T \) has \( a \) arms and
\( \ell \) legs, then \( \A^{a}(T) = \A^{a+1}(T) = \cdots \) and
\( \L^{\ell}(T) = \L^{\ell+1}(T) = \cdots \). Thus,
\( \U_{\L^N\A^M}(T) \) is the same for all \( N\geqslant \ell \) and
\( M\geqslant a \). We will write the result as
\( \U_{\L^\infty\A^\infty}(T) \). We define
\( \U_{\A^\infty\L^\infty}(T) \) similarly.

\subsection{Tableau switching}
\label{subsec.switching}

We recall results from a paper by Benkart, Sottile, and Stroomer~\cite{BSS} on tableau switching.

\begin{definition}\label{def:switching}
  Let \( T \) be an \( \alpha \)-column-strict and
  \( \beta \)-row-strict mixed tableau. Let \( u=T(i,j) \) be an entry
  of \( T \) and let \( v \) be the entry \( T(i,j+1) \) or
  \( T(i+1,j) \). Suppose \( u=\alpha_r \) and \( v=\beta_s \). Let
  \( T' \) be the mixed tableau obtained from \( T \) by interchanging
  \( u \) and \( v \). If \( T' \) is \( \alpha \)-column-strict and
  \( \beta \)-row-strict, such a process is called a \defn{switch}. If
  there is no possible switch, we say that \( T \) is \defn{fully
    switched}.
\end{definition}

Note that in a switch, we always move an \( \alpha \)-entry to the
north or east, and a \( \beta \)-entry to the south or west. Hence, if
we keep applying switches to a tableau, it eventually becomes fully
switched.

\begin{example}\label{exa:switching}
  In this example, we only write the indices, where the
  \( \alpha \)-indices are colored yellow. One possible switching procedure is as
  follows:
  \begin{align*}
  \ytableausetup{boxsize = 1em}
    \begin{ytableau}
       8 & 6 & 5 & 2 \\
      *(yellow)2 & *(yellow)1 & 6 & 2 & 1 \\
      *(gray) & *(yellow)2 & *(yellow)2 & *(yellow)1 & 5 & 1
    \end{ytableau}
    &\Longrightarrow    
    \begin{ytableau}
       8 & 6 & 5 & 2 \\
      *(yellow)2 & *(yellow)1 & 6 & 2 & 1 \\
      *(gray) & *(yellow)2 & *(yellow)2 & 5 & *(yellow)1 & 1
    \end{ytableau}
    \Longrightarrow    
    \begin{ytableau}
       8 & 6 & 5 & 2 \\
      *(yellow)2 & *(yellow)1 & 6 & 2 & 1 \\
      *(gray) & *(yellow)2 & *(yellow)2 & 5 & 1 & *(yellow)1
    \end{ytableau}
    \Longrightarrow    
    \begin{ytableau}
       8 & 6 & 5 & 2 \\
      *(yellow)2 & 6 & *(yellow)1 & 2 & 1 \\
      *(gray) & *(yellow)2 & *(yellow)2 & 5 & 1 & *(yellow)1
    \end{ytableau}
    \Longrightarrow    
    \begin{ytableau}
       8 & 6 & *(yellow)1 & 2 \\
      *(yellow)2 & 6 & 5 & 2 & 1 \\
      *(gray) & *(yellow)2 & *(yellow)2 & 5 & 1 & *(yellow)1
    \end{ytableau}\\
    \Longrightarrow    
    \begin{ytableau}
       8 & 6 & *(yellow)1 & 2 \\
      *(yellow)2 & 6 & 5 & 2 & 1 \\
      *(gray) & *(yellow)2 & 5 & *(yellow)2 & 1 & *(yellow)1
    \end{ytableau}
    &\Longrightarrow    
    \begin{ytableau}
       8 & 6 & *(yellow)1 & 2 \\
      *(yellow)2 & *(yellow)2 & 5 & 2 & 1 \\
      *(gray) & 6 & 5 & *(yellow)2 & 1 & *(yellow)1
    \end{ytableau}
    \Longrightarrow    
    \begin{ytableau}
       *(yellow)2 & 6 & *(yellow)1 & 2 \\
       8& *(yellow)2 & 5 & 2 & 1 \\
      *(gray) & 6 & 5 & *(yellow)2 & 1 & *(yellow)1
    \end{ytableau}
    \Longrightarrow    
    \begin{ytableau}
       *(yellow)2 & 6 & 2 & *(yellow)1 \\
       8& *(yellow)2 & 5 & 2 & 1 \\
      *(gray) & 6 & 5 & *(yellow)2 & 1 & *(yellow)1
    \end{ytableau}
    \Longrightarrow 
    \begin{ytableau}
       *(yellow)2 & *(yellow)2 & 2 & *(yellow)1 \\
       8& 6 & 5 & 2 & 1 \\
      *(gray) & 6 & 5 & *(yellow)2 & 1 & *(yellow)1
    \end{ytableau} \\
    \Longrightarrow    
    \begin{ytableau}
       *(yellow)2 & *(yellow)2 & 2 & *(yellow)1 \\
       8& 6 & 5 & *(yellow)2 & 1 \\
      *(gray) & 6 & 5 & 2 & 1 & *(yellow)1
    \end{ytableau} 
    &\Longrightarrow    
    \begin{ytableau}
       *(yellow)2 & *(yellow)2 & 2 & *(yellow)1 \\
       8& 6 & 5 & 1 & *(yellow)2 \\
      *(gray) & 6 & 5 & 2 & 1 & *(yellow)1
    \end{ytableau} 
    \Longrightarrow    
    \begin{ytableau}
       *(yellow)2 & 2 & *(yellow)2 & *(yellow)1 \\
       8& 6 & 5 & 1 & *(yellow)2 \\
      *(gray) & 6 & 5 & 2 & 1 & *(yellow)1
    \end{ytableau} 
    \Longrightarrow    
    \begin{ytableau}
       2 & *(yellow)2 & *(yellow)2 & *(yellow)1 \\
       8& 6 & 5 & 1 & *(yellow)2 \\
      *(gray) &  6 & 5 & 2 & 1 & *(yellow)1
    \end{ytableau}\, \raisebox{-.8cm}{.}
  \end{align*}
\end{example}

The following theorem will be used as an important ingredient
when we prove our main results.

\begin{theorem} \cite[Theorem~2.2]{BSS}
\label{thm.switch}
Let \( T \) be an \( \alpha \)-column-strict, \( \beta \)-row-strict,
and \( (\alpha,\beta) \)-sorted mixed tableau. We apply switches to
\( T \) until it is fully switched. Then, the resulting tableau is
\( \alpha \)-column-strict, \( \beta \)-row-strict, and
\( (\beta,\alpha) \)-sorted. Furthermore, it is independent of the sequence of
switches that produced it.
\end{theorem}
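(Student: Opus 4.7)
The plan is to prove \Cref{thm.switch} in three stages: termination of any switching sequence, identification of a fully switched result as $(\beta,\alpha)$-sorted, and confluence (independence of the switching sequence). By \Cref{def:switching}, $\alpha$-column-strictness and $\beta$-row-strictness are preserved automatically at every step, since a swap is declared a switch only when both conditions survive. So these invariants require no separate argument.

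For termination, I would introduce the monovariant
\[
  \Phi(T) = \sum_{\substack{(i,j)\in \mu/\lambda \\ T(i,j)=\alpha_r \text{ for some } r}} (i+j).
\]
Each switch moves an $\alpha$-entry one step north (increasing $i$ by one, in the French convention used in the paper) or one step east (increasing $j$ by one), so $\Phi$ strictly increases with each switch. Since $\Phi$ is bounded above by a constant depending only on $\mu/\lambda$, no switching sequence can be infinite, and the process terminates in some fully switched tableau $T^\star$.

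For the $(\beta,\alpha)$-sortedness of $T^\star$, I would argue by contraposition. If the set of $\beta$-cells, together with $\lambda$, did not form a Young diagram, then there would exist a $\beta$-cell $(i,j)$ of $T^\star$ such that $(i,j-1)$ or $(i-1,j)$ is an $\alpha$-cell, i.e., some $\alpha$-cell has a $\beta$-neighbor immediately to its east or north. I would then examine such a locally un-sorted configuration and exhibit a valid switch at it, contradicting the hypothesis that $T^\star$ is fully switched. The verification compares the indices of the $\alpha$ and $\beta$ entries being exchanged against their row and column neighbors, using $\alpha$-column-strictness and $\beta$-row-strictness of $T^\star$ to confirm that the swap is indeed a switch. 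This reduces to a finite local case check.

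The main obstacle is confluence, and my preferred route is to identify the switching procedure with Goulden--Greene jeu de taquin: treat each $\beta$-cell as an empty slot sliding through the $\alpha$-subtableau (or, dually, each $\alpha$-cell as an empty slot sliding through the $\beta$-subtableau). Under this identification, each single switch corresponds to a single slide step, and the fully switched tableau encodes the rectification of the sub-tableau. Since jeu de taquin is known to be confluent (the rectification does not depend on the order of slides), confluence of tableau switching follows. As a backup, one may invoke Newman's diamond lemma: termination is already established, so it suffices to verify local confluence, namely that any two distinct switches $s_1, s_2$ applicable at $T$ admit continuations of $s_1(T)$ and $s_2(T)$ to a common tableau. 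Disjoint switches commute trivially, and the overlapping configurations (an $\alpha$-entry with eligible $\beta$-neighbors both east and north, or a $\beta$-entry with eligible $\alpha$-neighbors both west and south) reduce to a short finite case analysis completing the diamond.
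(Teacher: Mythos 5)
The paper does not prove this theorem at all: it is imported verbatim as a known result, \cite[Theorem~2.2]{BSS}, with a following Remark that only reconciles terminology (``perforated pair'' versus ``$t$-perforated pair'') and notes that the $\beta$-row-strict variant used here is justified by a remark on page~22 of~\cite{BSS}. So there is no internal argument to compare against; your proposal is an independent reconstruction of a cited black box, and I will assess it on its own terms.

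Your termination step is correct: a switch at $(i,j)$ moves the $\alpha$-entry to $(i,j{+}1)$ or $(i{+}1,j)$ while leaving every other $\alpha$-cell fixed, so the sum $\Phi$ of $i+j$ over $\alpha$-cells strictly increases and is bounded. However, the $(\beta,\alpha)$-sortedness step has a gap. You assert that at any local inversion — an $\alpha_r$ with a $\beta$ directly north or east — a valid switch exists, but the swap is \emph{not} automatically a switch: legality depends on entries elsewhere. For instance, if $\alpha_r$ sits at $(i,j)$, $\beta_s$ at $(i,j{+}1)$, and some $\alpha_t$ with $t<r$ sits at $(i',j{+}1)$ with $i'<i$, then the horizontal swap would put $\alpha_t$ weakly southwest of $\alpha_r$ and violate $\alpha$-column-strictness (condition (1) of \Cref{def:strict}), even though this configuration is perfectly legal before the swap. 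One must choose the inversion carefully (e.g.\ a maximal one) and run a genuine case analysis showing that at least one of the two candidate swaps is always available; as written, the argument does not close.

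Your confluence step is also problematic, in two ways. First, within this paper's logical structure, deriving tableau-switching confluence from Goulden--Greene jeu de taquin would be circular: Section~\ref{subsec.ggjdt} uses \Cref{thm.switch} (via \Cref{prop.perforated} and \Cref{cor:switch}) to establish properties of $\jdt$, not the reverse. Second, even setting circularity aside, tableau switching is a two-sided process — the $\alpha$-skeleton rectifies while the $\beta$-skeleton simultaneously anti-rectifies — and the confluence asserted by BSS is strictly stronger than confluence of ordinary one-tableau jeu de taquin; the reduction is not as clean as ``each $\beta$-cell is an empty box.'' Your backup via Newman's lemma is the right framework (termination is established, so local confluence would suffice), but calling the diamond verification ``a short finite case analysis'' substantially understates the content: one must close diamonds for overlapping $2\times 2$ and L-shaped patches under all admissible orderings of the four indices involved, and this is essentially the multi-page heart of the BSS proof. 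As a sketch the plan is reasonable, but both the sortedness claim and the local confluence check need to be worked out before this would stand as a proof.
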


\begin{corollary}\label{cor:switch}
\mbox{}
\begin{enumerate}
\item Let \( T \) be an \( \alpha \)-column-strict,
  \( \beta \)-row-strict, and \( (\alpha,\beta) \)-sorted mixed
  tableau. Let \( X_1 \) and \( X_2 \) be fully switched tableaux obtained from
  \( T \) by some sequences of switches.
  Then \( X_1=X_2 \).
\item Let \( T_1 \) and \( T_2 \) be \( \alpha \)-column-strict,
  \( \beta \)-row-strict, and \( (\alpha,\beta) \)-sorted mixed
  tableaux. Suppose that \( X \) is a fully switched tableau obtained from both
  \( T_1 \) and \( T_2 \) by some
  sequences of switches. Then \( T_1=T_2 \).
\end{enumerate}

\end{corollary}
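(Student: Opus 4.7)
Part (1) is an immediate consequence of Theorem~\ref{thm.switch}, which already asserts that the fully switched tableau obtained by applying switches to \( T \) is independent of the sequence of switches chosen; hence \( X_1 = X_2 \).

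For Part~(2), my strategy is to reduce to Part~(1) by exploiting the reversibility of switching. Each individual switch is a local involution: swapping the same two cells a second time restores the original tableau. The reverse of a switching sequence \( T_i\to\cdots\to X \) therefore takes \( X \) back to \( T_i \), but using ``reverse switches'' in which an \( \alpha \)-entry moves southwest and a \( \beta \)-entry moves northeast. To convert reverse switches into forward switches so that Theorem~\ref{thm.switch} becomes applicable, I would compose two involutions on mixed tableaux: transposition (which swaps rows with columns while preserving the NE/SW orientation) and the relabeling \( \alpha\leftrightarrow\beta \). Under this composite transformation, reverse switches in the original convention correspond precisely to forward switches in the transformed data; being \( \alpha \)-column-strict and \( \beta \)-row-strict is preserved, since transposition exchanges column-strictness with row-strictness and the label swap then re-swaps the roles; and the two sortedness types interchange, so an \( (\alpha,\beta) \)-sorted tableau becomes \( (\beta,\alpha) \)-sorted, and vice versa.

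With these translations in place, reverse-then-transform each given switching sequence \( T_i\to\cdots\to X \) to obtain a forward switching sequence (in the new labels) starting from the transformed \( X \), which is \( \alpha \)-column-strict, \( \beta \)-row-strict, and \( (\alpha,\beta) \)-sorted after the transformation by Theorem~\ref{thm.switch} and the interchange above. Each sequence ends at the transformed \( T_i \), which is \( (\beta,\alpha) \)-sorted; inspecting the boundary of this sortedness shows that no forward switch can apply, so the transformed \( T_i \) is fully switched. Applying Part~(1) to the transformed \( X \), both transformed \( T_1 \) and transformed \( T_2 \) must coincide with the unique fully switched tableau obtainable from it; undoing the transformation gives \( T_1 = T_2 \).

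The main obstacle is simply the careful bookkeeping needed to confirm that transposition composed with the \( \alpha\leftrightarrow\beta \) relabel genuinely interchanges ``reverse switch'' with ``forward switch'' and interchanges the two sortedness types, while preserving the strictness conditions of Definition~\ref{def:strict}. Once these compatibilities are in hand, the corollary falls out of a single application of Theorem~\ref{thm.switch} to the transformed data.
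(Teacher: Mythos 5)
Part (1) matches the paper exactly, so nothing to say there.

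For Part (2), your approach is correct but uses a genuinely different symmetry than the paper. You compose \emph{transposition} with the \emph{label swap} $\alpha \leftrightarrow \beta$ (keeping indices fixed), whereas the paper composes \emph{$180^\circ$ rotation} with the \emph{index reversal} $i \mapsto N+1-i$ (keeping the labels $\alpha,\beta$ fixed, $N$ being the largest index). Both transformations achieve the same three things: they exchange forward and reverse switches, they preserve the pair of conditions ``$\alpha$-column-strict and $\beta$-row-strict'', and they interchange the two sortedness types $(\alpha,\beta)$ and $(\beta,\alpha)$; and in both cases the target tableau is seen to be fully switched because any $(\beta,\alpha)$-sorted tableau admits no further switch. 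So either transformation reduces Part (2) to Part (1) in one step. The only caveat in your version, worth noting when filling in the bookkeeping, is that after the relabel the new $\alpha$ entries inherit the indices of the old $\beta$ entries, which \Cref{def:MT} allows to be any integer, so the result may not formally be a ``mixed tableau'' as defined; but this is cosmetic since the strictness, sortedness, and switch conditions never use positivity of the $\alpha$-indices (the paper's index-reversal has an analogous harmless artefact). The computation you flag as ``the main obstacle''---checking that transposition maps southwest to southwest while swapping row/column strictness, and that the relabel then re-swaps the roles of $\alpha$ and $\beta$---is indeed the crux, and it does go through.
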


\begin{proof}
  The first statement is immediate from \Cref{thm.switch}. The second
  statement follows from the first statement if we consider the
  tableaux obtained by rotating \( T \), \( X_1 \), and \( X_2 \) by
  \( 180^\circ \) and replacing each index \( i \) by \( N+1-i \),
  where \( N \) is the largest index in \( T \).
\end{proof}

\begin{remark}
  We note that in \cite{BSS} different terminologies are used
  and the order is reversed:
  \begin{itemize}
  \item A perforated pair is a mixed tableau that is \( \alpha \)-column-strict and \( \beta \)-column-strict.
  \item A \( t \)-perforated pair is a mixed tableau that is \( \alpha \)-column-strict and \( \beta \)-row-strict.
  \end{itemize}
  We also note that, unlike \Cref{thm.switch}, in
  \cite[Theorem~2.2]{BSS}, it is assumed that \( T \) is
  \( \alpha \)-column-strict and \( \beta \)-column-strict, and remains
  so throughout the switch process. However, as mentioned in the last
  paragraph on page 22 of \cite{BSS}, it is still true if we replace the
  requirement with \( \alpha \)-column-strict and
  \( \beta \)-row-strict.
\end{remark}

\section{Main results}
\label{section.proof}

In this section, we state and prove our main results. In Section~\ref{subsec.armleg}, we study the effects of interchanging leg
and arm uncrowding on the recording tableau, which we relate to a particular sequence of tableau switching. 
In Section~\ref{subsec.ggjdt}, we define the Goulden-Greene jeu de taquin (GG-jdt) algorithm and show that it can be formulated 
as tableau switching discussed in Section~\ref{subsec.switching}. In
Section~\ref{subsec.image}, we finally state and prove our main
results regarding
the equivalence of combinatorial models for refined canonical stable Grothendieck polynomials
and the image of the uncrowding algorithm.

\subsection{Tableau switching on the recording tableau}
\label{subsec.armleg}
In this subsection, we study the effects of first performing leg-uncrowding and then arm-uncrowding on a hook-valued tableau in comparison to 
the order of first applying arm-uncrowding and then leg-uncrowding as studied in~\cite{PPPS}. To characterize the changes to the recording tableaux, 
we need to define a particular sequence of tableau switches on an $(\alpha, \beta)$-tableau.

\begin{definition} 
\label{def:shuffle} 
Let $Q$ be a flagged-mixed tableau that is \( \alpha \)-column-strict,
\( \beta \)-row-strict, and \( (\alpha,\beta) \)-sorted. The (jeu de
taquin) \defn{shuffle} of $Q$, denoted $\shuff(Q)$, is the tableau
obtained as follows:
\begin{enumerate}
\item Find the elements with weight $\alpha$ having an element of weight $\beta$ above or to the right.
\item Among them, choose the rightmost element that has the smallest index, say $\alpha_i$.
\item Continue the following `switching process' until the cells above and to the right of $\alpha_i$ have a weight $\alpha$ or are empty. If only one of the cells directly above or to the right of $\alpha_i$ has weight $\beta$, then switch $\alpha_i$ with this element. Otherwise, let $\beta_k$ and $\beta_j$ be the entries above and to the right of $\alpha_i$, respectively, and perform one of the following switches
 \[
\ytableausetup{boxsize = 2em}
    \begin{ytableau}
      \beta_k & \none \\
      \alpha_i & \beta_j
    \end{ytableau}
    \mapsto
    \begin{ytableau}
      \alpha_i & \none \\
      \beta_k & \beta_j
    \end{ytableau}
    \quad\mbox{if \( k > j \)},\qquad
    \begin{ytableau}
     \beta_k & \none \\
     \alpha_i & \beta_j
    \end{ytableau}
    \mapsto
    \begin{ytableau}
     \beta_k & \none \\
     \beta_j & \alpha_i
    \end{ytableau}
    \quad\mbox{if \( k \leqslant j \).}
  \] 
\item Repeat steps (1)-(3) until there is no cell having weight $\alpha$ with an element of weight $\beta$ directly above or to its right.
\end{enumerate}
\end{definition}

Note that \( \shuff(Q) \) can be seen to be equivalent to the shuffling procedure outlined in \cite{BSS} via~\cite[Lemma~2.7]{Haiman1992} 
and~\cite[Chapter 2]{Sh}. By~\cite[Proposition~2.5]{BSS},  \( \shuff(Q) \)  is a special case of tableau switching, hence \( \shuff(Q) \) is 
\( \alpha \)-column-strict, \( \beta \)-row-strict, and \( (\beta,\alpha) \)-sorted.

\begin{example}\label{exa:shuffle}
  The following shows the process of the shuffle, where boxes
  containing an \( \alpha \) are colored in yellow:
  \begin{align*}
  \ytableausetup{boxsize = 1.5em}
  Q= \begin{ytableau}
       \beta_8 & \beta_6 & \beta_5 & \beta_2 \\
      *(yellow)\alpha_2 & *(yellow)\alpha_1 & \beta_6 & \beta_2 & \beta_1 \\
      *(gray) & *(yellow)\alpha_2 & *(yellow)\alpha_2 & *(yellow)\alpha_1 & \beta_5 & \beta_1
    \end{ytableau}
    &\Longrightarrow    
    \begin{ytableau}
       \beta_8 & \beta_6 & \beta_5 & \beta_2 \\
      *(yellow)\alpha_2 & *(yellow)\alpha_1 & \beta_6 & \beta_2 & \beta_1 \\
      *(gray) & *(yellow)\alpha_2 & *(yellow)\alpha_2 & \beta_5 & \beta_1 & *(yellow)\alpha_1
    \end{ytableau}
    \Longrightarrow    
    \begin{ytableau}
       \beta_8 & \beta_6 & \beta_2 & *(yellow)\alpha_1 \\
      *(yellow)\alpha_2 & \beta_6  & \beta_5 & \beta_2 & \beta_1 \\
      *(gray) & *(yellow)\alpha_2 & *(yellow)\alpha_2 & \beta_5 & \beta_1 & *(yellow)\alpha_1
    \end{ytableau} \\
    \Longrightarrow    
    \begin{ytableau}
       \beta_8 & \beta_6 & \beta_2 & *(yellow)\alpha_1 \\
      *(yellow)\alpha_2 & \beta_6  & \beta_5 & \beta_1 & *(yellow)\alpha_2 \\
      *(gray) & *(yellow)\alpha_2 & \beta_5 & \beta_2 & \beta_1 & *(yellow)\alpha_1
    \end{ytableau}
    &\Longrightarrow    
    \begin{ytableau}
       \beta_8 & \beta_2 & *(yellow)\alpha_2 & *(yellow)\alpha_1 \\
      *(yellow)\alpha_2 & \beta_6 & \beta_5 & \beta_1 & *(yellow)\alpha_2 \\
      *(gray) &  \beta_6 & \beta_5 & \beta_2 & \beta_1 & *(yellow)\alpha_1
    \end{ytableau}
    \Longrightarrow    
    \begin{ytableau}
       \beta_2 & *(yellow)\alpha_2 & *(yellow)\alpha_2 & *(yellow)\alpha_1 \\
       \beta_8& \beta_6 & \beta_5 & \beta_1 & *(yellow)\alpha_2 \\
      *(gray) &  \beta_6 & \beta_5 & \beta_2 & \beta_1 & *(yellow)\alpha_1
    \end{ytableau}= \shuff(Q).
  \end{align*}
  Note this is an example of tableau switching,
  and the result \( \shuff(Q) \) is the same as the
  last tableau in \Cref{exa:switching}.
\end{example}

\begin{definition}\label{def:type}
  Let \( T \) be a hook-valued tableau. For a word
  \( w=f_k^{(\epsilon_k)} \cdots f_1^{(\epsilon_1)} \) with
  \( f_i\in \{\Abump,\Lbump\} \) and \( \epsilon_i\in \{0,1\} \), we
  say that \( T \) is \defn{of type} \( w \) if, for each
  \( i= 1,\dots,n \),
  \[
    |\shape(f_{i} \circ \cdots \circ f_1 (T))|
    = |\shape(f_{i-1} \circ \cdots \circ f_1 (T))| + \epsilon_i.
  \]
\end{definition}

\begin{example}\label{ex:type}
  The following shows the hook-valued tableaux \( T\), \( \Abump(T) \), and
  \( \Lbump(T) \):
\[{\def\mc#1{\makecell[lb]{#1}}
\,{\begin{array}[lb]{*{4}{|l}|}\cline{1-1}
\mc{55}\\\cline{1-1}
\mc{4}\\\cline{1-3}
\mc{222}&\mc{3\\2}&\mc{4\\3}\\\cline{1-4}
\mc{\phantom{2}\\1}&\mc{\phantom{2}\\1}&\mc{\phantom{3}\\2}&\mc{3\\2}\\\cline{1-4}
\end{array}\,},\quad
\,{\begin{array}[lb]{*{4}{|l}|}\cline{1-1}
\mc{5}\\\cline{1-2}
\mc{4}&\mc{5}\\\cline{1-3}
\mc{222}&\mc{3\\2}&\mc{4\\3}\\\cline{1-4}
\mc{\phantom{2}\\1}&\mc{\phantom{2}\\1}&\mc{\phantom{3}\\2}&\mc{3\\2}\\\cline{1-4}
\end{array}\,},\quad
\,{\begin{array}[lb]{*{4}{|l}|}\cline{1-1}
\mc{55}\\\cline{1-2}
\mc{4}&\mc{4}\\\cline{1-3}
\mc{\phantom{3}\\222}&\mc{3\\2}&\mc{\phantom{4}\\3}\\\cline{1-4}
\mc{\phantom{2}\\1}&\mc{\phantom{2}\\1}&\mc{\phantom{3}\\2}&\mc{3\\2}\\\cline{1-4}
\end{array}\,}.
}
\]
As \( \Abump(T) \) and \( \Lbump(T) \) created new cells, \(T\) has both type \(\Abump^{(1)}\) and \(\Lbump^{(1)}\). Note that \( \Abump(T) \) 
has type \(\Lbump^{(0)}\) and \( \Lbump(T) \) has type \(\Abump^{(1)}\). Thus, \(T\) also has type \(\Lbump^{(0)}\Abump^{(1)}\) and 
\(\Abump^{(1)}\Lbump^{(1)}\).
\end{example}

The following lemma shows that \( \Abump \) and \( \Lbump \)
essentially commute.

\begin{lemma}\label{lem:AL=LA}
  Let $T$ be a hook-valued tableau with at least one arm and at least one leg.
\begin{enumerate}
  \item If \( T \) has types \( \Lbump^{(1)}\Abump^{(1)} \) and \( \Abump^{(0)}\Lbump^{(1)} \), then
    it also has type \( \Abump^{(1)}\Abump^{(0)}\Lbump^{(1)} \), and
    \[
      \Lbump \circ \Abump(T) = \Abump^2 \circ \Lbump(T) .
    \]
  \item If \( T \) has types \( \Lbump^{(0)}\Abump^{(1)} \) and \( \Abump^{(1)}\Lbump^{(1)} \), then it
    also has type \( \Lbump^{(1)}\Lbump^{(0)}\Abump^{(1)} \) and
    \[
      \Lbump ^2 \circ \Abump(T) = \Abump \circ \Lbump(T).
    \]
  \item Suppose that \( T \) is not in the first two cases above. If
    \( T \) has type \( \Lbump^{(x)}\Abump^{(y)} \) for some \( x,y\in \{0,1\} \), then
    it also has type \( \Abump^{(y)}\Lbump^{(x)} \) and
    \[
      \Lbump\circ \Abump(T) = \Abump\circ \Lbump(T).
    \]
  \end{enumerate}
  \end{lemma}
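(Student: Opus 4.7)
The plan is a careful case analysis based on the positions of the cells involved in the arm bump \(\Abump\) and the leg bump \(\Lbump\). Denote by \((r_A, c_A)\) the cell containing the bumped arm entry \(a\) (so \(c_A\) is the rightmost column of \(T\) with an arm and \(a\) is the largest arm entry in column \(c_A\)), and similarly \((r_L, c_L)\) for the bumped leg entry \(\ell\). Both \(\Abump\) and \(\Lbump\) are local operations, modifying at most a few cells: \(\Abump\) alters the cell \((r_A, c_A)\) together with either some existing cell in column \(c_A+1\) or a newly created cell at the top of column \(c_A+1\); \(\Lbump\) is analogous with row \(r_L+1\).

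First I would identify when the two operations affect disjoint sets of cells. Since \(c_A\) is the rightmost column with an arm in \(T\), the arm bump propagates only into column \(c_A+1\); similarly the leg bump propagates only into row \(r_L+1\). If the cells touched by the two operations are disjoint, neither bump changes the position or the target of the other, and they commute without any shape-type interaction, yielding Case~(3) of the lemma. In particular, none of the guarded situations (where arms or legs are transported alongside the bumped entry) can arise.

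Next I would analyze the non-disjoint configurations. The interesting overlap occurs when the cells involved in \(\Abump\) share a row or column with those involved in \(\Lbump\). Case~(1) corresponds to the situation where applying \(\Lbump\) first moves a leg into column \(c_A+1\) (or modifies an entry there relevant to the subsequent arm bump) in such a way that the first arm bump after \(\Lbump\) lands on an existing cell rather than creating a new one; a second arm bump is then required to recover the same shape as \(\Lbump \circ \Abump(T)\). Case~(2) is the symmetric situation with the roles of arms/legs and columns/rows interchanged. In each such overlap subcase I would explicitly write out the hook, arm, and leg entries of the affected cells before and after bumping in both orders, verify that the resulting tableaux agree, and check that the shapes evolve as dictated by the type assertions.

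The main obstacle will be the bookkeeping in the overlap subcases. The rules for \(\Abump\) and \(\Lbump\) contain provisions for transporting additional legs (resp.\ arms) from the originating cell to the target cell when the bump stays in the same row (resp.\ column), and these can interact nontrivially with a subsequent bump of the opposite type. A useful organizing principle is to exploit a symmetry between \(\Abump\) and \(\Lbump\) under transposing the tableau, which reduces Case~(2) to Case~(1) and roughly halves the casework. A further helpful reduction is that if \((r_A, c_A)\) and \((r_L, c_L)\) are ``far apart'', the two processes cannot influence each other, so only a bounded number of local configurations near the overlap need explicit verification.
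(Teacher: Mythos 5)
Your proposal identifies the right high-level shape of the argument — a case analysis organized around whether and how the arm-bump and leg-bump interact — and this matches the paper's approach. However, as written it is a plan, not a proof: the critical overlap subcases are only gestured at (``I would explicitly write out\dots and verify''), and those subcases are precisely where the content of the lemma lives. The paper carries this out in five cases keyed to where the entries $a$ (bumped arm) and $\ell$ (bumped leg) land: (i) $a$ and $\ell$ in the same cell; (ii) $a$ bumps $\ell$ or vice versa; (iii) both bump the same nonempty entry; (iv) both bump into the same empty cell; (v) all remaining configurations. Note that (i)--(iii) are genuine overlap cases yet still yield part~(3) of the lemma; only (iv) produces parts~(1) and~(2). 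Your proposal collapses ``overlap'' too quickly into the nontrivial types $\Abump^{(1)}\Abump^{(0)}\Lbump^{(1)}$ and $\Lbump^{(1)}\Lbump^{(0)}\Abump^{(1)}$, without isolating the fact that most overlapping configurations still commute on the nose.

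A more substantial concern is your proposed reduction by ``a symmetry between $\Abump$ and $\Lbump$ under transposing the tableau.'' This symmetry is not exact: the arm bump seeks the smallest entry $k \geqslant a$ in the next column, while the leg bump seeks the smallest entry $k > \ell$ in the next row, reflecting that arm entries are weakly increasing while leg entries are strictly increasing (and the hook entry satisfies $h \leqslant A_1$ but $h < L_1$). Transposition therefore does not send one bumping rule to the other, and the halving of casework it promises is not available without further justification. Indeed, the asymmetry is exactly what makes the paper's Case~4 split into two genuinely different subcases, $a\leqslant\ell$ versus $\ell<a$, which give parts~(1) and~(2) of the lemma respectively rather than a single case handled up to transposition. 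Finally, your claim that disjointness of the touched cells immediately gives commutativity needs a small extra observation: the bump rules can transport arm or leg entries sideways within the originating cell when the bump stays in the same row or column, so ``disjoint'' must be interpreted to include those ancillary moves, as the paper's Case~5 implicitly does.
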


\begin{proof}
Define $a$ and $\ell$ to be the arm and leg entries in $T$ bumped by $\Abump$ and $\Lbump$, respectively. Additionally, let $(i,j)$ be the cell containing 
the arm entry $a$ in $T$. We break into five cases based on where $a$ and $\ell$ are bumped by $\Abump$ and $\Lbump$.

\smallskip

\noindent
\textbf{Case 1:}
Assume that $a$ and $\ell$ are in the same cell, $a < \ell$, and $\Abump$ bumps $a$ into the cell $(i,j+1)$. In this case, $\ell$ is moved to cell $(i,j+1)$ by $\Abump$ where it remains the topmost leg that is rightmost in its row. As the bumping of $a$ by $\Abump$ does not affect row $i+1$ and the bumping of $\ell$ does not affect column $j+1$, we have $\Lbump \circ \Abump(T) = \Abump \circ \Lbump(T)$. A similar argument holds when $a$ and $\ell$ are in the same cell $(i,j)$, $a \geqslant \ell$, and $\Lbump$ bumps $\ell$ into the cell $(i+1,j)$.

In this case, \( T \) is of type \( \Lbump^{(y)}\Abump^{(x)} \) and of type \( \Abump^{(x)}\Lbump^{(y)} \), where \( x,y\in \{0,1\} \). 

\smallskip

\noindent
\textbf{Case 2:}
Assume that $a$ bumped $\ell$ in $\Abump(T)$. By the assumption, $\ell$ must reside in some cell $(i', j+1)$ in $T$ where $i' < i$. As $a$ bumped $\ell$, $a$ is now the topmost leg that is rightmost in its row within $\Abump(T)$, and $\ell$ is now an arm of the cell $(i', j+1)$.
Moreover, $a$ must be strictly smaller than the entry in cell $(i'+1, j+1)$, which is possibly empty, and weakly greater than all of the entries in cell $(i'+1,j)$.
Thus, $\Lbump$ acts on $\Abump(T)$ by leg bumping $a$ into cell $(i'+1, j+1)$ and lifting $\ell$ up to cell $(i'+1, j+1)$ as an arm. On the other hand, $\Lbump$ acts on $T$ by bumping $\ell$ into the cell $(i'+1, j+1)$, which is possibly empty, since $\ell$ is weakly greater than all of the entries in cell $(i'+1, j)$.
The arm $a$ is still the rightmost arm that is topmost in its column in $\Lbump(T)$. Thus, $a$ bumps $\ell$ in cell $(i'+1, j+1)$ in $\Abump \circ \Lbump(T)$. We see that $\Lbump \circ \Abump(T) = \Abump \circ \Lbump(T)$. A similar argument holds when $\ell$ bumps $a$ in $\Lbump(T)$.

In the first case, \( T \) is of type \( \Lbump^{(y)}\Abump^{(0)} \) and of type \( \Abump^{(0)}\Lbump^{(y)} \), where \( y\in \{0,1\} \).
In the second case, \( T \) is of type \( \Lbump^{(0)}\Abump^{(x)} \) and of type \( \Abump^{(x)}\Lbump^{(0)} \), where \( y\in \{0,1\} \).

\smallskip

\noindent
\textbf{Case 3:}
Assume that $a$ and $\ell$ bump the same nonempty entry, $x$, in $\Abump(T)$ and $\Lbump(T)$, respectively. This implies that in the tableau $T$, $\ell$ is contained in some cell $(i', j')$ where $i' \leqslant i-1$ and $j' \geqslant j+1$ and $x$ is contained in cell $(i'+1, j+1)$. If $\ell < a$, we see that in both $\Lbump \circ \Abump(T)$ and $\Abump \circ \Lbump(T)$, $a$ becomes the leg of cell $(i'+1, j+1)$, $\ell$ becomes the hook entry of $(i'+1, j+1)$, and $x$ becomes the arm of $(i'+1, j+1)$. If $a \leqslant \ell$, the entries $x$, $a$, and $\ell$ become the leg, hook entry, and arm, respectively, of the cell $(i'+1, j+1)$ in both $\Lbump \circ \Abump(T)$ and $\Abump \circ \Lbump(T)$. Thus, we have $\Lbump \circ \Abump(T) = \Abump \circ \Lbump(T)$.

In this case, \( T \) is of types \( \Lbump^{(0)}\Abump^{(0)} \) and \( \Abump^{(0)}\Lbump^{(0)} \).

\smallskip

\noindent
\textbf{Case 4:} Assume now that $a$ and $\ell$ are bumped into the
same empty cell by $\Abump$ and $\Lbump$, respectively. As in the
previous paragraph, $\ell$ is contained within some cell $(i', j')$ in
$T$ where $i' \leqslant i-1$ and $j' \geqslant j+1$ and the empty cell
must have coordinate $(i'+1, j+1)$.

If $a \leqslant \ell$, we have
$\Lbump \circ \Abump(T) = \Abump^2 \circ \Lbump(T)$ where $a$ is in
cell $(i'+1, j+1)$ and $\ell$ is in cell $(i'+1, j+2)$. In this case,
\( T \) is of types \( \Lbump^{(1)}\Abump^{(1)} \), \( \Abump^{(0)}\Lbump^{(1)} \), and \( \Abump^{(1)}\Abump^{(0)}\Lbump^{(1)} \).

However, if $\ell < a$, then
$\Lbump ^2 \circ \Abump(T) = \Abump \circ \Lbump(T)$ where $\ell$ is
in cell $(i'+1, j+1)$ and $a$ is in cell $(i'+2, j+1)$.
In this case,
\( T \) is of types \( \Abump^{(1)}\Lbump^{(1)} \), \( \Lbump^{(0)}\Abump^{(1)} \), and \( \Lbump^{(1)}\Lbump^{(0)}\Abump^{(1)} \).

\smallskip

\noindent
\textbf{Case 5:} In all other cases, the bumpings of $a$ and $\ell$ do
not interact, which implies that $\Abump$ and $\Lbump$ trivially
commute. In this case, if \( T \) has type
\( \Lbump^{(x)}\Abump^{(y)} \) for some \( x,y\in \{0,1\} \), then it
also has type \( \Abump^{(y)}\Lbump^{(x)} \) and
\( \Lbump\circ \Abump(T) = \Abump\circ \Lbump(T) \).

\smallskip

The above cases imply the desired result.
\end{proof}

We are now ready to discuss the changes to both the insertion and recording tableaux after swapping the order of the leg and arm-uncrowding 
algorithms to a hook-valued tableau.

\begin{proposition} \label{prop:commute} Let $T$ be a hook-valued
  tableau and set $(P_1, Q_1) = \U_{\L\A}(T)$ and
  $(P_2, Q_2) = \U_{\A\L}(T)$. Then the pairs of tableaux
  $(P_1, Q_1)$ and $(P_2, Q_2)$ satisfy the following relations:
\begin{enumerate}
\item $P_2 = P_1$ and 
\item $Q_2 = \shuff(Q_1)$.
\end{enumerate}
\end{proposition}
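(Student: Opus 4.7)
The plan is to deduce both parts from the commutation lemma, Lemma \ref{lem:AL=LA}. Recall that $\A(T) = \Abump^{r}(T)$ and $\L(T) = \Lbump^{s}(T)$ for the smallest $r, s$ that produce a shape increase, so both $\L \circ \A(T)$ and $\A \circ \L(T)$ are bumping chains that enlarge the shape of $T$ by exactly two cells.

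For Part (1), I would prove $\L \circ \A(T) = \A \circ \L(T)$ by invoking the appropriate case of Lemma \ref{lem:AL=LA} for the type of $T$. In each case the lemma produces an identity of the form $\Lbump^{s_1} \circ \Abump^{r_1}(T) = \Abump^{r_2} \circ \Lbump^{s_2}(T)$ with specific numbers of bumpings on each side. A short verification, using the stopping rules defining $\A$ and $\L$ (namely, that each terminates as soon as the shape grows by one cell), then shows that $\Lbump^{s_1} \circ \Abump^{r_1}(T)$ is precisely $\L \circ \A(T)$ and $\Abump^{r_2} \circ \Lbump^{s_2}(T)$ is precisely $\A \circ \L(T)$, yielding $P_1 = P_2$. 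Degenerate cases in which $T$ has no arms or no legs are handled separately; there the proposition holds trivially since one of $Q_1, Q_2$ reduces to the other and $\shuff$ acts as the identity.

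For Part (2), I would perform a case analysis following the five geometric configurations in the proof of Lemma \ref{lem:AL=LA}: the arm-bump and leg-bump may reside in the same cell, one may displace the other, both may displace the same existing entry, both may fill the same new corner cell, or they may act disjointly. In each configuration, using the explicit bumping trajectories described in that proof, one reads off the positions and labels of the two cells created by $\U_{\L\A}(T)$ and by $\U_{\A\L}(T)$ via Definition \ref{defn.uncrowd}. A direct check then verifies that the labels $\alpha_c$ and $\beta_r$ agree in $Q_1$ and $Q_2$, and that the cell positions are related by a single shuffle switch in the sense of Definition \ref{def:shuffle}, which gives $Q_2 = \shuff(Q_1)$.

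The main obstacle is the bookkeeping in Part (2), particularly in the ``competing corner'' configuration, where one ordering uses a single pair of bumpings while the other uses a chained composition such as $\Abump^2 \circ \Lbump$ or $\Lbump^2 \circ \Abump$. There the two new cells end up in the same row at adjacent columns, and one must trace carefully how the chained bumping determines the position of the second new cell. Further care is needed when the $\A$-bump co-moves a leg (or the $\L$-bump co-moves an arm), since this can in principle shift the original arm/leg to a different cell; one argues that this shift is compensated in both orderings so that the column and row labels still match up to the single shuffle switch.
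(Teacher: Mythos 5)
Your plan correctly identifies \Cref{lem:AL=LA} and the stopping rules of $\A$ and $\L$ as the key ingredients, but for Part~(1) it substantially understates the work and misstates what the lemma delivers. \Cref{lem:AL=LA} only commutes a \emph{single} $\Abump$ with a \emph{single} $\Lbump$ (sometimes at the cost of one extra bumping on one side), whereas $\L\circ\A(T) = \Lbump^{m}\circ\Abump^{n}(T)$ for positive integers $m,n$ that may be arbitrarily large. The lemma never produces identities of the form $\Lbump^{s_1}\circ\Abump^{r_1}(T) = \Abump^{r_2}\circ\Lbump^{s_2}(T)$ for general $s_1,r_1$, as you suggest. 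The paper's proof iterates the lemma, pushing the leftmost $\Abump$ past every $\Lbump$ one at a time; this requires splitting into three sub-cases on $m$ (because cases~(1) and~(2) of the lemma insert an extra bumping) and tracking the type of each intermediate tableau to verify both that the remaining $\Abump$'s, all of type $\Abump^{(0)}$, commute freely past the $\Lbump$'s, and that the final composition $\Abump^{r+n-1}\circ\Lbump^{s}(T)$ is exactly $\A\circ\L(T)$. That iteration and type-tracking are the heart of the argument; ``a short verification'' elides them.

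For Part~(2) you propose a case analysis over the five configurations from the lemma's proof, which is a genuinely different route from the paper's, but it also has a mismatch: those five cases describe the interaction of a single $\Abump$ with a single $\Lbump$, not the entire chains $\A$ and $\L$, so they do not directly govern the recording tableaux. The paper instead gives a structural argument that avoids the casework almost entirely: the indices of $\alpha$ and $\beta$ in $Q_1$ and $Q_2$ agree because $\A$ never changes the row of a leg and $\L$ never changes the column of an arm; the skew shapes of $Q_1$ and $Q_2$ agree because $P_1=P_2$; on a two-cell tableau $\shuff$ either swaps the two fillings (if the cells are adjacent) or does nothing; and $Q_1$ is $(\alpha,\beta)$-sorted while $Q_2$ is $(\beta,\alpha)$-sorted by construction of $\U_{\L\A}$ and $\U_{\A\L}$. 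Together these yield $Q_2=\shuff(Q_1)$ directly. Adjacency is identified with Case~4 of the lemma's proof only as a final observation, not as the backbone of a case analysis. If you pursue your case-by-case route, you would additionally need to justify how the single-bumping cases control the full bumping chains, which is nontrivial.
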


\begin{proof}
  For the first statement, since \( P_1 = \L\circ \A(T) \) and
  \( P_2 = \A\circ \L(T) \), we must show that \( \L\circ \A(T) = \A\circ \L(T) \). If
  \( T \) has no arm or no leg, then one of \( \A \) and \( \L \)
  becomes the identity map, hence \( \L\circ \A(T) = \A\circ \L(T) \). We now
  assume that \( T \) has at least one arm and at least one leg.

  Recall that if a hook-valued tableau \( H \) has at least one arm
  (resp.~leg), then \( \A(H)=\Abump^k(H) \)
  (resp.~\( \L(H)=\Lbump^k(H) \)), where \( k \) is the smallest
  integer such that \( \Abump^k(H) \) (resp.~\( \Lbump^k(H) \)) has
  one more cell than \( H \). Thus
  \( \L\circ \A(T) = \Lbump^{m}\circ \Abump^{n}(T) \) for the smallest positive
  integers \( m \) and \( n \) such that
  \begin{align*}
    |\shape(\Abump^{n}(T))| &> |\shape(T)|,\\
    |\shape(\Lbump^{m}\Abump^{n}(T))| &> |\shape(\Abump^{n}(T))|.
  \end{align*}
  Equivalently, \( T \) has type 
  \( \Lbump^{(1)} \overbrace{\Lbump^{(0)} \cdots \Lbump^{(0)}}^{m-1}
\Abump^{(1)} \overbrace{\Abump^{(0)} \cdots \Abump^{(0)}}^{n-1} \).

Our goal is to move all \( \Abump \)'s to the left of all
\( \Lbump \)'s in \( \Lbump^{m}\circ \Abump^{n}(T) \). We claim that
the leftmost \( \Abump \) can always be moved to the left until it
becomes the leftmost operator without changing the resulting tableau.
We consider three cases.

\smallskip \noindent
\textbf{Case 1:} \( m=1 \). Then \( \Abump^{n-1}(T) \) has type
\( \Lbump^{(1)}\Abump^{(1)} \). By \Cref{lem:AL=LA},
\[
  \Lbump^{m}\circ \Abump^{n}(T) = \Lbump\circ \Abump(\Abump^{n-1}(T))
  =
  \begin{cases}
    \Abump^2\circ \Lbump(\Abump^{n-1}(T))
    & \mbox{if \( \Abump^{n-1}(T) \) has type \( \Abump^{(0)}\Lbump^{(1)} \)},\\
    \Abump\circ \Lbump(\Abump^{n-1}(T))
    & \mbox{otherwise.}
  \end{cases}
\]
In either case, the claim holds.

\smallskip \noindent
\textbf{Case 2:} \( m=2 \). Then \( \Abump^{n-1}(T) \) has type
\( \Lbump^{(0)}\Abump^{(1)} \). By \Cref{lem:AL=LA},
\[
  \Lbump^{m}\circ \Abump^{n}(T) = \Lbump^2\circ \Abump(\Abump^{n-1}(T))
  =
  \begin{cases}
    \Abump\circ \Lbump(\Abump^{n-1}(T))
    & \mbox{if \( \Abump^{n-1}(T) \) has type \( \Abump^{(1)}\Lbump^{(1)} \)},\\
    \Lbump\circ \Abump\circ \Lbump(\Abump^{n-1}(T))
    & \mbox{otherwise.}
  \end{cases}
\]
The claim holds in the first case. In the second case, the claim
follows from the argument in Case~1.

\smallskip \noindent
\textbf{Case 3:} \( m\geqslant 3 \). Then \( \Abump^{n-1}(T) \) has type
\( \Lbump^{(0)}\Lbump^{(0)}\Abump^{(1)} \). Note that, by
\Cref{lem:AL=LA}, for a hook-valued tableau \( H \) of type
\( \Lbump^{(0)}\Abump^{(1)} \), we have
\( \Lbump\circ \Abump (H) = \Abump\circ \Lbump (H) \) unless \( H \)
has type \( \Abump^{(1)}\Lbump^{(1)} \), in which case \( H \) must
also have type \( \Lbump^{(1)}\Lbump^{(0)}\Abump^{(1)} \). Since
\( \Abump^{n-1}(T) \) has type
\( \Lbump^{(0)}\Lbump^{(0)}\Abump^{(1)} \), we have
\[
  \Lbump^{m}\circ \Abump^{n}(T)
  = \Lbump^{m-1}\circ \Lbump\circ \Abump(\Abump^{n-1}(T))
  = \Lbump^{m-1}\circ \Abump\circ \Lbump(\Abump^{n-1}(T)).
\]
Repeating the same argument with \( \Lbump^j\Abump^{n-1}(T) \) for
\( j=1,2,\dots,m-3 \), we obtain
\[
  \Lbump^{m}\circ \Abump^{n}(T)
= \Lbump^{2}\circ \Abump\circ \Lbump^{m-2}\circ \Abump^{n-1}(T).
\]
Then the claim follows from the same argument as in Case 2.

By the claim, we can write
\begin{equation}\label{eq:1}
  \Lbump^{m}\circ \Abump^{n}(T)
  = \Abump^{r}\circ \Lbump^s \circ \Abump^{n-1}(T)
\end{equation}
for some \( r\in \{1,2\} \) and \( s\in \{m-1,m\} \). Note that
\Cref{lem:AL=LA} implies that if a hook-valued tableau \( H \) has
type \( \Abump^{(0)} \), then we always have
\( \Lbump\circ \Abump (H) = \Abump\circ \Lbump (H) \). Since every
non-leftmost \( \Abump \) gives type \( \Abump^{(0)} \) in
\eqref{eq:1}, we have
\begin{equation}\label{eq:2}
  \Lbump^{m}\circ \Abump^{n}(T)
  = \Abump^{r}\circ \Lbump^s \circ \Abump^{n-1}(T)
  = \Abump^{r+n-1}\circ \Lbump^s(T).
\end{equation}
Moreover, by \Cref{lem:AL=LA}, the operator giving type
\( \Lbump^{(1)} \) (resp.~\( \Abump^{(1)} \)) in \eqref{eq:2} is
always the leftmost \( \Lbump \) (resp.~\( \Abump \)). Hence, we
obtain that \( T \) has type
\( \Abump^{(1)} \overbrace{\Abump^{(0)} \cdots \Abump^{(0)}}^{r+n-2}
\Lbump^{(1)} \overbrace{\Lbump^{(0)} \cdots \Lbump^{(0)}}^{s-1} \).
This implies that
  \begin{align*}
    \L\circ \A(T) = \Lbump^{m}\circ \Abump^{n}(T) = \Abump^{r+n-1}\circ \Lbump^{s}(T) = \A\circ \L(T),
  \end{align*}
 which is the first statement.

\medskip

We now prove the second statement.
Note that $Q_1$ and $Q_2$ are both comprised of two cells: one having weight $\alpha$ and the other having weight $\beta$. As $\A$ does not change the row index of any leg and $\L$ does not change the column index of any arm, the indices of $\alpha$ and $\beta$ in $Q_1$ are the same as the indices of $\alpha$ and $\beta$ in $Q_2$, respectively. It remains to consider the position of the cells having weight $\alpha$ and $\beta$ in $Q_1$ and $Q_2$.

Since $Q_1$ is comprised only of a cell with weight $\alpha$ and a cell with weight $\beta$, $\shuff(Q_1)$ switches the filling of the two cells if and 
only if the two cells are adjacent and otherwise leaves $Q_1$ unchanged. As $P_2 = P_1$ by \Cref{prop:commute}~(1), it follows that 
$Q_2 = \shuff(Q_1)$ if the two cells in $Q_1$ are adjacent.

Note that if \( \A \) (resp.~\( \L \)) starts with or bumps a cell
\( (i,j) \) during the process and creates a new cell \( (i',j') \) at
the end, then \( i'\leqslant i \) and \( j'>j \) (resp.~\( i'>i \) and
\( j'\leqslant j \)). Therefore, in the proof of
\Cref{prop:commute}~(1), it is straightforward to check that the two
cells in $Q_1$ are adjacent if and only if the new cells created in
$\A(T)$ and $\L(T)$ are in the same position (\textbf{Case 4} in the
proof of \Cref{lem:AL=LA}). This implies that if the two cells in
$Q_1$ are not adjacent, then $Q_2 = Q_1 = \shuff(Q_1)$. In either
case, we have the desired equality $Q_2 = \shuff(Q_1)$.
\end{proof}

\begin{theorem} \label{thm.shuff.q}
  Let $T$ be a hook-valued tableau and set $(P_1, Q_1) = \U_{\L^\infty\A^\infty} (T)$ and $(P_2, Q_2) = \U_{\A^\infty\L^\infty} (T)$. 
  Then the pairs of tableaux $(P_1, Q_1)$ and $(P_2, Q_2)$ satisfy the following relations:
  \begin{enumerate}
\item $P_2 = P_1$ and 
\item $Q_2 = \shuff(Q_1)$.
\end{enumerate}
\end{theorem}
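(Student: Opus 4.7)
The plan is to reduce this statement to iterated applications of \Cref{prop:commute} by bubble-sorting the word $\L^N\A^M$ into $\A^M\L^N$ via a sequence of adjacent transpositions of the form $\L\A \to \A\L$, for $N$ and $M$ chosen large enough that $\U_{\L^N\A^M}(T) = \U_{\L^\infty\A^\infty}(T)$ and $\U_{\A^M\L^N}(T) = \U_{\A^\infty\L^\infty}(T)$. For any word $w$ in $\{\A,\L\}$, write $(P_w, Q_w) = \U_w(T)$. First I prove the following interpolation lemma: if $w$ and $w'$ differ only by a single adjacent transposition $\L\A \to \A\L$ at positions $(j+1, j)$ (reading right-to-left), then $P_w = P_{w'}$, and $Q_{w'}$ is obtained from $Q_w$ either by leaving the two cells $\lambda^{(j)}/\lambda^{(j-1)}$ and $\lambda^{(j+1)}/\lambda^{(j)}$ untouched, or by exchanging them exactly as in a tableau switch in the sense of \Cref{def:switching}. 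The lemma follows by applying \Cref{prop:commute} to the intermediate hook-valued tableau $H = w_{j-1} \circ \cdots \circ w_1(T)$: part~(1) gives $\L \circ \A(H) = \A \circ \L(H)$, so all subsequent operations produce identical intermediate tableaux and hence $P_w = P_{w'}$; part~(2) then describes precisely how the two affected cells transform.

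Iterating the interpolation lemma across the bubble-sort sequence of $MN$ swaps immediately yields $P_2 = P_1$, proving statement~(1). For statement~(2), $Q_2$ is obtained from $Q_1$ by a sequence of (possibly trivial) tableau switches. To conclude $Q_2 = \shuff(Q_1)$, I verify that $Q_1$ is $\alpha$-column-strict, $\beta$-row-strict, and $(\alpha,\beta)$-sorted, and that $Q_2$ is $\alpha$-column-strict, $\beta$-row-strict, and $(\beta,\alpha)$-sorted and hence fully switched. The sortedness conditions are immediate from the structure of $\L^N\A^M$ (resp.~$\A^M\L^N$), since all $\alpha$-cells are added before (resp.~after) all $\beta$-cells. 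Then \Cref{cor:switch}(1) implies that any two fully switched tableaux obtained from $Q_1$ via switching sequences coincide; since $\shuff(Q_1)$ is itself fully switched and obtained from $Q_1$ by a switching sequence (as noted after \Cref{def:shuffle}), we conclude $Q_2 = \shuff(Q_1)$.

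The main obstacle is verifying that $Q_1$ is $\alpha$-column-strict and $\beta$-row-strict, so that \Cref{cor:switch} applies. While $(\alpha,\beta)$-sortedness is immediate from the ordering of operators, the column-strictness of the $\alpha$-entries and the row-strictness of the $\beta$-entries require a separate structural argument exploiting the fact that $\A$-bumpings process arms rightmost-column-first (controlling the column indices $c$ of the $\alpha_c$ labels) and $\L$-bumpings process legs topmost-row-first (controlling the row indices $r$ of the $\beta_r$ labels). Once these strictness properties are established, each operator swap in our bubble sort corresponds to a valid tableau switch (or the identity), and the proof goes through cleanly.
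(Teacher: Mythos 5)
Your proof of part~(1) is the same as the paper's: iterate Proposition~\ref{prop:commute}(1) along a bubble-sort of the word $\L^N\A^M$ into $\A^M\L^N$. For part~(2), however, you take a genuinely different route. The paper argues \emph{directly} that the sequence of commutations reproduces the shuffle algorithm step by step: it observes that commuting a single $\A$ past all the $\L$'s corresponds exactly to step~(3) of Definition~\ref{def:shuffle}, and that the order in which $\alpha$'s are processed during $\A^\infty$ is the reverse of the order in which $\shuff$ selects them, so the two processes coincide as sequences of switches. You instead invoke the uniqueness of the fully switched tableau (Corollary~\ref{cor:switch}(1)): once you know (i) $Q_1$ is $\alpha$-column-strict, $\beta$-row-strict, $(\alpha,\beta)$-sorted, (ii) $Q_2$ is $(\beta,\alpha)$-sorted and hence fully switched, and (iii) the bubble sort turns $Q_1$ into $Q_2$ by a sequence of (possibly trivial) switches, you can conclude $Q_2 = \shuff(Q_1)$ without matching the sequences at all. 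Your version buys you a cleaner conclusion (no need to verify the ordering of $\alpha$'s), at the cost of needing Theorem~\ref{thm.switch}/Corollary~\ref{cor:switch}; the paper's is more hands-on and exhibits the commutations literally as shuffle steps.

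On the gap you flag: it is real, but smaller than you make it sound, and it is shared with the paper. For $\shuff(Q_1)$ to even be defined (Definition~\ref{def:shuffle}) one already needs $Q_1$ to be $\alpha$-column-strict, $\beta$-row-strict, and $(\alpha,\beta)$-sorted, so these properties are presupposed by the theorem statement itself; the paper records them in Lemma~\ref{lemma.PQ} by reference to Definition~\ref{defn.uncrowd}, i.e.\ they are treated as structural facts about recording tableaux of the uncrowding algorithm from~\cite{PPPS}. You may take them as given rather than reprove them. What \emph{does} deserve an explicit line in your argument is point (iii): Proposition~\ref{prop:commute}(2) tells you how the two affected cells of the recording tableau transform, but to apply Corollary~\ref{cor:switch}(1) you also need each nontrivial bubble-sort step to be a valid switch in the sense of Definition~\ref{def:switching} — that is, the full intermediate recording tableau must remain $\alpha$-column-strict and $\beta$-row-strict after the two-cell swap, not just the two-cell patch. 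This follows because each intermediate $Q_w$ is itself the recording tableau of a bona fide uncrowding $\U_w(T)$ and hence inherits the same strictness properties, but that observation should be stated; the paper sidesteps it by matching the commutation sequence to the shuffle, which is already known to consist of valid switches.
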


\begin{proof}
Let $m$ and $n$ denote the arm and leg excesses of $T$, respectively. Note that $(P_1, Q_1) = \U_{\L^n\A^m}(T)$ and $(P_2, Q_2) = \U_{\A^m\L^n}(T)$. 
The fact that $P_2 = P_1$ follows directly from Proposition~\ref{prop:commute}~(1) by iteratively commuting the 
$\A$ operators through all of the $\L$'s. Thus, it remains to show $Q_2 = \shuff(Q_1)$.

During the `switching process' in the jeu de taquin shuffle, an element having weight $\alpha$ is switched with $\beta$-weighted elements having 
successively smaller indices. Thus, commuting an $\A$ operator through all of $\L$'s corresponds to performing Step (3) in the jeu de taquin shuffle (in \Cref{def:shuffle}) 
by Proposition ~\ref{prop:commute}~(2). Moreover, the order of the $\alpha$'s considered during the jeu de taquin shuffle is exactly opposite of the
 order in which they are recorded when applying $\A^{\infty}$. Therefore, iteratively commuting the $\A$ operators through all of the $\L$'s exactly 
 corresponds to performing the jeu de taquin shuffle on $Q_1$ giving the desired equality, $Q_2 = \shuff(Q_1)$.
\end{proof}

\subsection{Goulden-Greene jeu de taquin}
\label{subsec.ggjdt}

In this subsection, we define the Goulden--Greene jeu de taquin
algorithm~\cite{GG} (a process they refer to as the modified jeu de taquin) using our notation.
This map was also studied by Krattenthaler \cite{Kra}.
We then show that it can be realized as a tableau switching.

\begin{definition}[GG-jdt slides]\label{def:GG-jdt-slides}
  Let \( T \) be an \( \alpha \)-column-strict and
  \( \beta \)-row-strict mixed tableau. We say that the entry
  \( T(i,j) \) is \defn{out of order} if at least one of the following
  conditions holds:
  \begin{enumerate}
  \item \( T(i,j) = \alpha_r \) and \( T(i,j+1) = \beta_s \) for some \( r \) and \( s \) with \( r<s + (j+1) - i \).
  \item \( T(i,j) = \alpha_r \) and \( T(i+1,j) = \beta_t \) for some \( r \) and \( t \) with \( r\leqslant t + j - (i+1) \).
  \end{enumerate}
  If Condition (1) (resp. Condition (2)) holds, we define the
  \defn{horizontal slide} (resp. \defn{vertical slide}) at \( (i,j) \)
  to be the operation that swaps \( \alpha_r \) and \( \beta_s \). 

  If \( T(i,j) \) is out of order, the \defn{GG-jdt slide} at
  \( (i,j) \) is the unique available operation between the horizontal
  slide and the vertical slide at \( (i,j) \) such that the resulting
  tableau is still \( \beta \)-row-strict. More precisely, if only
  Condition (1) (resp.~(2)) holds, then the GG-jdt slide at
  \( (i,j) \) is the horizontal (resp.~vertical) slide at \( (i,j) \).
  If both Conditions (1) and (2) hold, then \( T(i,j) = \alpha_r \),
  \( T(i,j+1) = \beta_s \), and \( T(i+1,j) = \beta_t \) for some
  \( r \), \( s \), and \( t \) with \( r<s + (j+1) - i \) and
  \( r\leqslant t + j - (i+1) \). In this case, the GG-jdt slide at
  \( (i,j) \) is the horizontal slide at \( (i,j) \) if \( t\leqslant s \)
  and the vertical slide at \( (i,j) \) if \( t > s \):
  \ytableausetup{boxsize = 2em}
  \[
    \begin{ytableau}
      \beta_t \\
      \alpha_r & \beta_s
    \end{ytableau}
    \mapsto
    \begin{ytableau}
      \beta_t \\
      \beta_s & \alpha_r
    \end{ytableau}
    \quad\mbox{if \( t \leqslant s \)},\qquad
    \begin{ytableau}
      \beta_t \\
      \alpha_r & \beta_s
    \end{ytableau}
    \mapsto
    \begin{ytableau}
      \alpha_r \\
      \beta_t & \beta_s
    \end{ytableau}
    \quad\mbox{if \( t > s \)}.
  \]
\end{definition}

\begin{definition}[GG-jdt map]\label{def:GG-jdt}
  Let \( T \) be a mixed tableau that is \( \alpha \)-column-strict,
  \( \beta \)-row-strict, and \( (\alpha,\beta) \)-sorted. The
  \defn{GG-jdt map} is the map \( \jdt \) sending \( T \) to the
  tableau \( \jdt(T) \) obtained as follows:
\begin{enumerate}
\item Find the smallest \( r \) such that \( \alpha_r \) is out of
  order in \( T \). Let \( (i,j) \) be the rightmost cell containing
  such an \( \alpha_r \).
\item Apply the GG-jdt slide to \( T \) at \( (i,j) \).
\item Repeat (1)-(2) until no entries are out of order.
\end{enumerate}
\end{definition}

Observe that the GG-jdt slide is exactly a switch defined in
\Cref{def:switching} except that it does not require the resulting
tableau to be \( \alpha \)-column-strict and
\( \beta \)-column-strict. However, we will see that this condition
holds automatically. Hence, the GG-jdt map is a sequence of switches.

\begin{example}\label{exa:GGjdt}
  The following is a tableau \( Q \), where the row indices of each
  tableau are written on its left: \ytableausetup{boxsize = 1.5em}
  \[
    \begin{ytableau}
    \none[9]&  *(gray) &*(gray) &  \beta_8 & \beta_6 & \beta_5 & \beta_2 \\
    \none[8]&  *(gray) &*(gray) & \alpha_2 & \alpha_1 & \beta_6 & \beta_2 & \beta_1 \\
    \none[7]&  *(gray) &*(gray) & *(gray) & \alpha_2 & \alpha_2 & \alpha_1 & \beta_5 & \beta_1\\
    \none[6]&  *(gray) & *(gray) & *(gray) & *(gray) & *(gray) & *(gray) & *(gray) & *(gray) \\ 
    \none[\vdots]&  \none[\vdots] & \none[\vdots] & \none[\vdots] & \none[\vdots] & \none[\vdots] & \none[\vdots] & \none[\vdots]& \none[\vdots]\\ 
    \none[1]&  *(gray) & *(gray) & *(gray) & *(gray) & *(gray) & *(gray) & *(gray) & *(gray) 
    \end{ytableau} \, \raisebox{-3cm}{.}
  \]
  Note that \( Q \) is a flagged-mixed tableau that is
  \( \alpha \)-column-strict, \( \beta \)-row-strict, and
  \( (\alpha,\beta) \)-sorted. The GG-jdt map applied to \( Q \)
  proceeds as follows, where we truncate the first two columns and the
  first six rows of the tableau, and color the boxes containing an
  \( \alpha \) in yellow: \ytableausetup{boxsize = 1.5em}
  \begin{align*}
   Q = \begin{ytableau}
      \beta_8 & \beta_6 & \beta_5 & \beta_2 \\
      *(yellow)\alpha_2 & *(yellow)\alpha_1 & \beta_6 & \beta_2 & \beta_1 \\
      *(gray) & *(yellow)\alpha_2 & *(yellow)\alpha_2 & *(yellow)\alpha_1 & \beta_5 & \beta_1
    \end{ytableau}
    &\Longrightarrow    
    \begin{ytableau}
      \beta_8 & \beta_6 & \beta_5 & \beta_2 \\
      *(yellow)\alpha_2 & *(yellow)\alpha_1 & \beta_6 & \beta_2 & \beta_1 \\
      *(gray) & *(yellow)\alpha_2 & *(yellow)\alpha_2 & \beta_5 & \beta_1 & *(yellow)\alpha_1
    \end{ytableau}
    \Longrightarrow    
    \begin{ytableau}
      \beta_8 & \beta_6 & *(yellow)\alpha_1 & \beta_2 \\
      *(yellow)\alpha_2 & \beta_6 & \beta_5 & \beta_2 & \beta_1 \\
      *(gray) & *(yellow)\alpha_2 & *(yellow)\alpha_2 & \beta_5 & \beta_1 & *(yellow)\alpha_1
    \end{ytableau}\, \\
    \Longrightarrow    
    \begin{ytableau}
      \beta_8 & \beta_6 & *(yellow)\alpha_1 & \beta_2 \\
      *(yellow)\alpha_2 & \beta_6 & \beta_5 & \beta_2 & \beta_1 \\
      *(gray) & *(yellow)\alpha_2 & \beta_5 &  *(yellow)\alpha_2 & \beta_1 & *(yellow)\alpha_1
    \end{ytableau}
    &\Longrightarrow    
    \begin{ytableau}
      \beta_8 & \beta_6 & *(yellow)\alpha_1 & \beta_2 \\
      *(yellow)\alpha_2 & *(yellow)\alpha_2 & \beta_5 & \beta_2 & \beta_1 \\
      *(gray) & \beta_6 & \beta_5 &  *(yellow)\alpha_2 & \beta_1 & *(yellow)\alpha_1
    \end{ytableau} 
    \Longrightarrow    
    \begin{ytableau}
      *(yellow)\alpha_2 & \beta_6 & *(yellow)\alpha_1 & \beta_2 \\
      \beta_8 & *(yellow)\alpha_2 & \beta_5 & \beta_2 & \beta_1 \\
      *(gray) & \beta_6 & \beta_5 &  *(yellow)\alpha_2 & \beta_1 & *(yellow)\alpha_1
    \end{ytableau} =\jdt(Q).
  \end{align*}
  The tableau resulting from applying \( c_\beta^+ \) to \( \jdt(Q) \) is 
  \[
    c_{\beta}^+(\jdt(Q)) =
    \begin{ytableau}
      *(yellow)\alpha_2 & \beta_1 & *(yellow)\alpha_1 & \beta_{-1} \\
      \beta_3 & *(yellow)\alpha_2 & \beta_2 & \beta_0 & \beta_0 \\
      *(gray) & \beta_3 & \beta_3 &  *(yellow)\alpha_2 & \beta_1 & *(yellow)\alpha_1
    \end{ytableau},
  \]
  which is totally column-strict.

  We can compare this with the tableau switching in
  \Cref{exa:shuffle}. Note that this process is a sequence of
  switches. By the uniqueness of the fully switched tableau, see
  \Cref{cor:switch} (1), if we keep applying switches to
  \( \jdt(Q) \), then we get the same resulting tableau
  \( \shuff(Q) \) at the end of \Cref{exa:shuffle}.
\end{example}

The following properties of the GG-jdt map were stated in \cite[Section~3]{GG}
without proof. Krattenthaler \cite[Lemma~1]{Kra} gave complete proofs
of these results.

\begin{proposition}\label{pro:GGjdt}\
  \begin{enumerate}
  \item   The GG-jdt map is a bijection from the set of
  \( \alpha \)-column-strict, \( \beta \)-row-strict, and
  \( (\alpha,\beta) \)-sorted tableaux to the set of tableaux \( E \)
  such that \( c_\beta^+(E) \) is a totally column-strict tableau.
\item Each iteration of the GG-jdt slide results in an
  \( \alpha \)-column-strict and \( \beta \)-row-strict tableau.
  \end{enumerate}
\end{proposition}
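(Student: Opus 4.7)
The plan is to prove (2) first by direct local case analysis, then deduce termination and the totally-column-strict property for (1), and finally construct an inverse to obtain bijectivity. For (2), $\beta$-row-strictness is immediate from the definition of the GG-jdt slide, which explicitly chooses the unique option (horizontal or vertical) preserving it; the work lies in verifying $\alpha$-column-strictness. For a horizontal slide at $(i,j)$ moving $\alpha_r$ to $(i,j+1)$, I would check the NE-index-decreasing condition of $\alpha_r$ against each new neighbor $(i-1,j+1)$, $(i+1,j+1)$, and $(i,j+2)$, combining the out-of-order inequality $r < s + (j+1) - i$ with the pre-slide $\alpha$-column-strict and $\beta$-row-strict hypotheses. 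The vertical slide is symmetric, and the delicate subcase where both out-of-order conditions hold is resolved by the rule $t \leq s$ versus $t > s$, which a short calculation shows produces the unique outcome that is both $\beta$-row-strict and $\alpha$-column-strict.

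For part (1), termination follows because each slide strictly increases $i + j$ for the $\alpha$-entry being moved (horizontal increases $j$, vertical increases $i$), so the total sum of $i+j$ over all $\alpha$-entries strictly grows at each step and is bounded above by the tableau size. When the algorithm halts, no $\alpha$ is out of order, and these halting conditions are precisely the negations of the totally-column-strict inequalities at $\alpha$-$\beta$ adjacencies after the content shift $c_\beta^+$. The $\alpha$-$\alpha$ instances of totally column-strict follow from the preserved $\alpha$-column-strictness together with the fact that $c_\beta^+$ fixes $\alpha$-entries; the $\beta$-$\beta$ instances follow because the content shift exactly converts the weakly-decreasing NE-constraint and row-distinctness of $\beta$-row-strict into the row-weak and column-strict inequalities demanded by totally column-strict.

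For the bijection, I would construct an inverse by reversing the algorithm: given $E$ with $c_\beta^+(E)$ totally column-strict, identify the largest-index $\alpha_r$ adjacent to a $\beta$ immediately to its west or south (violating the $(\alpha,\beta)$-sorted order) and perform the unique reverse slide that locally restores sortedness. Termination of the reverse procedure is symmetric (each reverse slide strictly decreases $i+j$ for the moved $\alpha$), and the terminal state is $(\alpha,\beta)$-sorted by construction. Mutual inverseness reduces to a local two-cell check at each slide. The main obstacle is the detailed case analysis in (2), especially the joint-condition subcase, where one must simultaneously control the new position of $\alpha_r$ against up to four neighbors and verify the inequalities under both branches of the choice rule.
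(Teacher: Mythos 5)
The paper does not give its own proof of this proposition: it explicitly defers to Krattenthaler \cite[Lemma~1]{Kra}, noting that Goulden and Greene stated these facts without proof. So your direct proof attempt is a genuinely different route, and the question is whether it closes.

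There is a gap in your argument for part (1). You assert that the halting conditions --- no entry out of order --- are precisely the negations of the totally-column-strict inequalities of \( c_\beta^+(E) \) at \( \alpha \)-\( \beta \) adjacencies. That is true only for adjacencies in which the \( \alpha \) sits to the \emph{southwest} of the \( \beta \), because ``out of order'' in \Cref{def:GG-jdt-slides} is defined only for an \( \alpha_r \) with a \( \beta \) immediately to its right or directly above. But in the terminal tableau there will also be adjacencies with a \( \beta \) southwest of an \( \alpha \) --- this is unavoidable, since the map pushes \( \alpha \)'s northeast past \( \beta \)'s --- and total column-strictness of \( c_\beta^+(E) \) imposes the complementary inequalities there: for instance \( \beta_s \) at \( (i,j) \) with \( \alpha_r \) at \( (i,j+1) \) forces \( s + j - i \geqslant r \), and \( \beta_t \) at \( (i,j) \) with \( \alpha_r \) at \( (i+1,j) \) forces \( t + j - i > r \). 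Neither is a termination condition. These must instead be established as an \emph{invariant} preserved by every GG-jdt slide: when a horizontal slide at \( (i,j) \) swaps \( \alpha_r \) with \( \beta_s \), the out-of-order hypothesis \( r < s + (j+1) - i \) does yield the required \( r \leqslant s + j - i \) for the adjacency just created, but one must also track the other new \( \beta \)-SW-of-\( \alpha \) adjacencies produced by the slide (e.g.\ a \( \beta \) at \( (i-1,j+1) \) now lying south of \( \alpha_r \)) against the pre-slide \( \alpha \)-column-strict and \( \beta \)-row-strict structure, with a symmetric check for vertical slides. This invariant is the real work and is absent from your sketch, which also does not actually verify that the tie-break rule (\( t \leqslant s \) versus \( t > s \)) always produces a \( \beta \)-row-strict outcome --- a fact the definition asserts but which needs a check of the full neighborhood, not just the two-by-two corner. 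These are precisely the details that Krattenthaler's cited proof supplies.
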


By \Cref{pro:GGjdt}~(2), the GG-jdt map can be formulated as a
tableau switching procedure as defined in
Section~\ref{subsec.switching}.

\begin{proposition}\label{prop.perforated}
  Let \( T \) be a flagged-mixed tableau that is
  \( \alpha \)-column-strict, \( \beta \)-row-strict, and
  \( (\alpha,\beta) \)-sorted. Then
  \( \jdt(T) \) is obtained from
  \( T \) by a sequence of switches.
\end{proposition}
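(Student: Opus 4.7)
The plan is to show that the GG-jdt map is a sequence of switches essentially by unpacking definitions and invoking \Cref{pro:GGjdt}~(2). First, I would compare \Cref{def:switching} with \Cref{def:GG-jdt-slides}: a switch acts on an entry \( T(i,j)=\alpha_r \) together with a neighbor \( v\in\{T(i,j+1),\,T(i+1,j)\} \) equal to some \( \beta_s \), and swaps them, provided the result is \( \alpha \)-column-strict and \( \beta \)-row-strict. A GG-jdt slide at \( (i,j) \) does exactly the same kind of swap (horizontal or vertical, depending on the indices involved). So shape-wise every GG-jdt slide is a candidate for a switch; the only thing to verify is the strictness condition on the output.

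Next, I would invoke \Cref{pro:GGjdt}~(2): each iteration of the GG-jdt slide produces an \( \alpha \)-column-strict and \( \beta \)-row-strict tableau. Since the input \( T \) is already of this type, an easy induction shows that every intermediate tableau produced while computing \( \jdt(T) \) is \( \alpha \)-column-strict and \( \beta \)-row-strict. In particular, the tableau obtained after each individual slide satisfies the requirement in \Cref{def:switching}, so that slide qualifies as a switch.

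Since \( \jdt(T) \) is, by \Cref{def:GG-jdt}, produced from \( T \) by iterating GG-jdt slides until no entry is out of order, we conclude that \( \jdt(T) \) is obtained from \( T \) by a (finite) sequence of switches. The argument is essentially bookkeeping: the genuinely nontrivial input is \Cref{pro:GGjdt}~(2), proved in \cite{Kra}, which is the step one would identify as the ``main obstacle'' if one had to redo the whole theory from scratch. Given that result, no further case analysis on the horizontal/vertical slide distinction is required, because both slide types are already of the shape required by \Cref{def:switching}.
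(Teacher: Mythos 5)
Your argument is correct and is essentially the same as the paper's: the paper observes that a GG-jdt slide has the same shape as a switch (swapping an $\alpha_r$ with an adjacent $\beta_s$ to its right or above) and that the only extra requirement for it to be a switch — that the output remain $\alpha$-column-strict and $\beta$-row-strict — is supplied by \Cref{pro:GGjdt}~(2). Identifying \Cref{pro:GGjdt}~(2) as the crux and running the short induction over intermediate tableaux is exactly the intended reasoning.
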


\subsection{Bijections and the image of the uncrowding algorithm}
\label{subsec.image}

In this subsection, we find a bijection that relates hook-valued
tableaux, exquisite tableaux, and a new class of tableaux called
biflagged tableaux.
We also show that GG-jdt is a bijection between
biflagged tableaux and exquisite tableaux.
As a corollary, we characterize the image of the
uncrowding algorithm \( \U_{\L^\infty\A^\infty} \) defined on
hook-valued tableaux.

\begin{definition}\label{def:BFT}
  A \defn{biflagged tableau} is a mixed tableau \( T \)
  satisfying the following conditions:
  \begin{enumerate}
  \item \( T \) is \( \alpha \)-column-strict, \( \beta \)-row-strict, and \( (\alpha,\beta) \)-sorted.
  \item Both \( T \) and \( \shuff(T) \) are flagged-mixed tableaux.
  \end{enumerate}
We denote by \( \BFT(\ml) \)
the set of biflagged tableaux of shape \( \ml \).
\end{definition}

\begin{example}\label{eg.biflagged}
When $\lambda = (2,1)$ and $\mu = (3,3,1)$, we have
\ytableausetup{boxsize = 1.4em}
\[
\BFT(\ml) = \left\{
\raisebox{1em}{\begin{ytableau}
\beta_2 \\
*(gray) & \alpha_1 & \beta_1 \\
*(gray) & *(gray) & \alpha_2
\end{ytableau}}\,,
\raisebox{1em}{\begin{ytableau}
\beta_1 \\
*(gray) & \alpha_1 & \beta_1 \\
*(gray) & *(gray) & \alpha_2
\end{ytableau}}\,,
\raisebox{1em}{\begin{ytableau}
\beta_2 \\
*(gray) & \alpha_1 & \alpha_1 \\
*(gray) & *(gray) & \alpha_2
\end{ytableau}}\,,
\raisebox{1em}{\begin{ytableau}
\beta_1 \\
*(gray) & \alpha_1 & \alpha_1 \\
*(gray) & *(gray) & \alpha_2
\end{ytableau}}
\right\}\,.
\]
Note that
\[
Q = \raisebox{1em}{\begin{ytableau}
\beta_2 \\
*(gray) & \alpha_1 & \beta_1 \\
*(gray) & *(gray) & \alpha_1
\end{ytableau}} \notin \BFT(\ml) \quad \text{ since } \quad \shuff(Q) =
\raisebox{1em}{\begin{ytableau}
\beta_2 \\
*(gray) & \alpha_1 & \alpha_1 \\
*(gray) & *(gray) & *(orange)\beta_1
\end{ytableau}} \notin \FMT,
\]
where the \( \beta_1 \) in the cell \( (1,3) \) does not satisfy the flagged condition.
\end{example}

We will show that GG-jdt is a bijection
from the set of biflagged tableaux \( \BFT(\ml) \) 
to the set of exquisite tableaux $\EXQ(\ml)$ defined in
\Cref{def:exq}. We denote by $\SSYT(\mu)$ the set of semistandard
Young tableaux of shape $\mu$. As usual, the weight \( \wt(P) \) of a
semistandard Young tableau \( P \) is the product of \( x_i \) for all
entries \( i \) in \( P \).

\begin{lemma}
\label{lemma.PQ}
  Let \(T\in \HVT(\lambda)\) and \( \U_{\L^\infty\A^\infty} (T) = (P,Q) \).
  Then \(P \in \SSYT(\mu)\) and \(Q\in \BFT(\ml)\) for a partition \(\mu\) with \(\lambda \subseteq \mu\). Furthermore, $\wt(T) = \wt(P)\wt(Q)$.
\end{lemma}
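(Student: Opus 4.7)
The plan is to establish in turn that $P \in \SSYT(\mu)$ for some partition $\mu \supseteq \lambda$, that $Q \in \BFT(\mu/\lambda)$, and that $\wt(T) = \wt(P)\wt(Q)$.

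For the first claim, since $\U_{\L^\infty\A^\infty}$ first applies $\A$ until the tableau has no arms and then $\L$ until it has no legs, each cell of $P$ contains only its hook entry. The HVT row and column conditions then specialize to the usual SSYT conditions, and $\mu := \shape(P) \supseteq \lambda$ because every application of $\A_b$ or $\L_b$ only ever creates cells.

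For $Q \in \BFT(\mu/\lambda)$ I would verify the four defining conditions separately. The $(\alpha,\beta)$-sorted property is immediate: the $\alpha$-cells of $Q$ are exactly those created during the initial $\A^\infty$ phase and form the skew shape $\nu/\lambda$ with $\nu := \shape(\A^\infty(T))$, while the $\beta$-cells form $\mu/\nu$. The flagged condition on $Q$ itself is direct from the construction: an $\A$ call recording $\alpha_c$ creates its new cell in some column $c+k$ with $k \geq 1$, and similarly a $\L$ call recording $\beta_r$ places it in some row $r+k$ with $k \geq 1$. The $\alpha$-column-strict and $\beta$-row-strict conditions I would handle by induction on the number of uncrowding steps, exploiting the fact that $\A$ always selects the rightmost column containing an arm (so that successive calls produce new cells whose positions force the required monotonicity of the recorded indices), and symmetrically for $\L$. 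Finally, the flagged condition on $\shuff(Q)$ is obtained by invoking Theorem~\ref{thm.shuff.q}, which identifies $\shuff(Q) = Q_2$ for $(P_2, Q_2) = \U_{\A^\infty\L^\infty}(T)$; running the same construction argument with $\L^\infty$ preceding $\A^\infty$ then shows that $Q_2$ is flagged-mixed.

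For the weight identity, both $\A_b$ and $\L_b$ preserve the multiset of integer entries, so $\wt(P) = \prod_i x_i^{\#\{i \in T\}}$. Each $\A$ call decreases the arm count in its starting column $c_0$ by exactly one and leaves all other columns unchanged, because the chain of intermediate $\A_b$ Case~1 bumps temporarily creates and then consumes an arm in each successive column, and the call terminates in a Case~2 bump that produces a new cell but no new arm. Since each such call records $\alpha_{c_0}$, the multiplicity of $\alpha_c$ in $Q$ equals the number of arm entries in column $c$ of $T$. The analogous statement for $\beta_r$ uses that $\A^\infty$ preserves per-row leg counts, as legs only migrate within their row in the $r = \widetilde{r}$ branch, so that $\L^\infty$ subsequently records $\beta_r$ with multiplicity equal to the number of leg entries in row $r$ of $T$. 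The main obstacle will be the verification of the $\alpha$-column-strict and $\beta$-row-strict conditions on $Q$, since it requires carefully tracking how the new cell produced by each call of $\A$ (respectively $\L$) relates to those produced earlier, as the rightmost column containing an arm (respectively topmost row containing a leg) shifts along the iteration.
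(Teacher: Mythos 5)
Your proof takes essentially the same approach as the paper's: the structural properties of $Q$ (that it is $\alpha$-column-strict, $\beta$-row-strict, $(\alpha,\beta)$-sorted, and flagged-mixed) are read off from the construction in Definition~\ref{defn.uncrowd}, and Theorem~\ref{thm.shuff.q} is invoked to identify $\shuff(Q)$ with the recording tableau of $\U_{\A^\infty\L^\infty}(T)$, which is likewise flagged-mixed, giving $Q\in\BFT(\ml)$. The paper's own proof is considerably more compressed, treating these facts (and the weight identity) as immediate consequences of Definition~\ref{defn.uncrowd}, while you correctly unpack the bookkeeping — including the observation that an $\A$ call records $\alpha_{c_0}$ for its starting column and removes exactly one arm there, and the analogous statement for $\L$ — which matches the intent of the paper's citation to the definition.
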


\begin{proof}
  By Definition~\ref{defn.uncrowd}, \( Q \) is
  \( \alpha \)-column-strict, \( \beta \)-row-strict, and
  \( (\alpha,\beta) \)-sorted, and \( Q\in\FMT \).
  Theorem~\ref{thm.shuff.q} states that
  \(\U_{\A^\infty\L^\infty} (T) = (P,\shuff(Q))\). Thus, by
  Definition~\ref{defn.uncrowd} again, \( \shuff(Q)\in\FMT \).
  Therefore $Q\in \BFT(\ml)$. The statement on the weight follows from
  Definition~\ref{defn.uncrowd}.
\end{proof}

\begin{lemma}\label{lem:BFT-EXQ}
Let \( Q\in \BFT(\ml) \). Then \( \jdt(Q)\in \EXQ(\ml) \).
\end{lemma}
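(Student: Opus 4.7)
The plan is to verify the two defining properties of an exquisite tableau (see \Cref{def:exq}): that $\jdt(Q)$ is flagged-mixed, and that $c_\beta^+(\jdt(Q))$ is totally column-strict. The second property is immediate from \Cref{pro:GGjdt}~(1), since every biflagged tableau is in particular $\alpha$-column-strict, $\beta$-row-strict, and $(\alpha,\beta)$-sorted. Thus the real content is the flaggedness of $\jdt(Q)$.

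For the flag condition on $\alpha$-entries, I would invoke \Cref{prop.perforated}: $\jdt(Q)$ is obtained from $Q$ by a sequence of switches, each of which moves its $\alpha$-entry weakly to the north or east. Column indices of $\alpha$-entries are therefore non-decreasing along the sequence, so the condition $r < j$ at each $\alpha_r$ in $Q$ carries over to every later tableau, and in particular to $\jdt(Q)$.

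The main step is the flag condition on $\beta$-entries, which I would establish by comparison with $\shuff(Q)$. By \Cref{thm.switch} together with \Cref{cor:switch}~(1), any switching sequence starting at $Q$ and continued until the tableau is fully switched terminates at the unique fully switched tableau, namely $\shuff(Q)$. Extending the switching sequence from $Q$ to $\jdt(Q)$ with further arbitrary switches therefore produces $\shuff(Q)$. In each such switch, the participating $\beta$-entry strictly decreases its row or its column (with the other coordinate unchanged), so tracking any fixed occurrence of $\beta_k$ in $\jdt(Q)$ yields a well-defined occurrence of $\beta_k$ in $\shuff(Q)$ at a weakly smaller row index. Consequently, if some $\beta_k$ in $\jdt(Q)$ were located at a row $i$ with $i \leq k$, the corresponding $\beta_k$ in $\shuff(Q)$ would also lie in a row $\leq k$, contradicting the fact that $\shuff(Q) \in \FMT$, which is built into the biflagged hypothesis. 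Hence $\jdt(Q)$ is flagged for its $\beta$-entries as well.

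The main (minor) subtlety I expect is the cell-by-cell tracking of $\beta$-entries: one must observe that each switch exchanges exactly one $\alpha$-$\beta$ pair, so every cell's occupant carries an unambiguous trajectory through the switching sequence, and therefore each occurrence of $\beta_k$ in $\jdt(Q)$ can be individually followed to a well-defined image in $\shuff(Q)$. Once this bookkeeping is made precise, the argument reduces cleanly to the monotonicity of the row indices of $\beta$-entries under switches, and the two conditions above combine to give $\jdt(Q) \in \EXQ(\ml)$.
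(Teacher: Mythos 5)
Your proposal is correct and follows essentially the same route as the paper's proof: cite \Cref{pro:GGjdt}~(1) for total column-strictness of $c_\beta^+(\jdt(Q))$, obtain the $\alpha$-flag condition from the weak northeast motion of $\alpha$-entries, and obtain the $\beta$-flag condition by continuing the switch sequence from $\jdt(Q)$ to the unique fully switched tableau $\shuff(Q)$ (via \Cref{cor:switch}~(1)) and using the weak southwest motion of $\beta$-entries together with $\shuff(Q)\in\FMT$.
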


\begin{proof}
  Let \(T = \jdt (Q)\). By \Cref{pro:GGjdt}~(1), \( c_{\beta}^+(T) \) is a
  totally column-strict tableau. Thus, by Definition~\ref{def:exq}, it
  remains to show that \( T \) is a flagged-mixed tableau.

  First, we need to show that if \( T(i,j) = \alpha_k \), then
  \( 0 < k < j \). Since \( T = \jdt (Q) \) and \( \jdt \) moves
  the \( \alpha \) entries weakly northeast, the entry
  \( \alpha_k \) at \( (i,j) \) in \( T \) came from the entry
  \( \alpha_k \) at \( (i',j') \) of \( Q \) for some \( i'\leqslant i \)
  and \( j'\leqslant j \). By the assumption \( Q\in \BFT(\ml) \), \( Q \)
  is a flagged-mixed tableau, which implies \( 0<k< j' \). Therefore,
  \( 0<k<j \) as desired.

  Now we need to show that if \( T(i,j) = \beta_r \), then
  \( 0< r < i \). By \Cref{prop.perforated}, \( T \) is obtained from
  \( Q \) by a sequence of switches. Let \( Q' \) be the fully
  switched tableau obtained from \( T \) by further applying as many
  switches as possible. Recall that switches move the \( \beta \)
  entries weakly southwest. Thus, the entry \( \beta_r \) at
  \( (i,j) \) in \( T \) will be moved to the entry \( \beta_r \) at
  \( (i',j') \) in \( Q' \) for some \( i'\leqslant i \) and \( j'\leqslant j \).
  Since both \( Q' \) and \( \shuff(Q) \) are fully switched tableaux
  starting from \( Q \), by Corollary~\ref{cor:switch}~(1), we obtain
  \( Q'=\shuff(Q) \). Since \(Q\in \BFT(\ml)\), \(\shuff(Q)\) is a
  flagged-mixed tableau, ensuring that the entry \( \beta_r \) at
  \( (i',j') \) in \( Q'=\shuff(Q) \) satisfies \( 0<r<i' \). Thus,
  \( 0<r<i \) as claimed. This completes the proof.
\end{proof}

By \Cref{lemma.PQ,lem:BFT-EXQ}, we can define the following map.

\begin{definition}\label{def:Phi}
We define the map
 \[
\Phi\colon \HVT(\lambda) \to \bigcup_{\mu \supseteq \lambda} \left( \SSYT(\mu)\times \EXQ(\ml) \right)
\]
as the composition of the following two maps:
\begin{equation}\label{eq:bijections}
  \HVT(\lambda)
  \xrightarrow{\U_{\L^\infty\A^\infty}}
 \displaystyle  \bigcup_{\mu \supseteq \lambda} \left( \SSYT(\mu)\times \BFT(\ml) \right)
  \xrightarrow{\mathrm{id}\times\jdt}
 \displaystyle \bigcup_{\mu \supseteq \lambda} \left( \SSYT(\mu)\times \EXQ(\ml) \right).
\end{equation}
In other words, \( \Phi(T) = (P,E) \), where
\( \U_{\L^\infty\A^\infty} (T) = (P,Q) \) and
\( E = \jdt(Q) \).
\end{definition}

We will show that the two maps \( \jdt \) and \( \Phi \) are
bijections. We first prove the injectivity of \( \jdt \).

\begin{lemma}\label{lem:Psi}
  The map \( \jdt \colon \BFT(\ml) \to \EXQ(\ml) \) is an injection.
\end{lemma}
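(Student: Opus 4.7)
The plan is to derive the injectivity of \( \jdt \colon \BFT(\ml)\to \EXQ(\ml) \) directly from \Cref{pro:GGjdt}~(1). That proposition asserts that \( \jdt \) is already a bijection from the set of all \( \alpha \)-column-strict, \( \beta \)-row-strict, \( (\alpha,\beta) \)-sorted mixed tableaux onto the set of all mixed tableaux \( E \) with \( c_\beta^+(E) \) totally column-strict. By \Cref{def:BFT}, every biflagged tableau \( Q\in \BFT(\ml) \) satisfies these three structural conditions, so \( \BFT(\ml) \) is a subset of the domain of that bijection.

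Since the restriction of an injection is still an injection, \( \jdt \) is injective on \( \BFT(\ml) \). Coupled with \Cref{lem:BFT-EXQ}, which guarantees \( \jdt(\BFT(\ml))\subseteq \EXQ(\ml) \), this yields the desired injection \( \jdt \colon \BFT(\ml)\to \EXQ(\ml) \).

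There is no real obstacle in this step; the substantive work has already been absorbed into \Cref{pro:GGjdt}~(1) (whose proof is attributed to Krattenthaler) and into \Cref{lem:BFT-EXQ}. What is not established by this short argument is surjectivity: the unique preimage of an exquisite tableau under the larger \( \jdt \)-bijection need not a priori be biflagged, since membership in \( \BFT(\ml) \) requires the additional property that \( \shuff(Q) \) lies in \( \FMT \). Verifying that preimages in fact land in \( \BFT(\ml) \) would be a separate matter, presumably taken up in a subsequent result of this subsection where the full bijectivity of \( \Phi \) is established.
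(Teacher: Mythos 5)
Your proof is correct, and it takes a genuinely different route from the paper's. You observe that \Cref{pro:GGjdt}~(1) already asserts that \( \jdt \) is a bijection on the larger set of \( \alpha \)-column-strict, \( \beta \)-row-strict, \( (\alpha,\beta) \)-sorted mixed tableaux, that \( \BFT(\ml) \) is a subset of that set by \Cref{def:BFT}, and that restricting an injection yields an injection; combined with \Cref{lem:BFT-EXQ} for well-definedness of the codomain, this is a complete argument. (One tacit point worth flagging: \( \jdt \) preserves shape, so bijectivity in \Cref{pro:GGjdt}~(1) specializes to each fixed skew shape \( \ml \); this is immediate from the definition of the GG-jdt slides.) The paper instead reasons via tableau switching: it uses \Cref{prop.perforated} to view \( \jdt(Q) \) as a partial switching of \( Q \), continues switching to reach the unique fully switched tableau, and invokes \Cref{cor:switch}~(2) to conclude that two preimages must coincide. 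The paper's argument is more self-contained within its tableau-switching framework and does not lean on Krattenthaler's bijectivity claim as a black box, whereas yours is shorter and more directly exploits the statement already recorded as \Cref{pro:GGjdt}~(1). Your closing remark about surjectivity is also accurate: neither argument shows that a preimage of an exquisite tableau lands in \( \BFT(\ml) \), and indeed the paper defers surjectivity to the generating-function comparison in \Cref{thm:2bij}.
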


\begin{proof}
  Suppose $Q, Q'\in \BFT(\ml)$ and \(E = \jdt(Q) = \jdt(Q') \). By
  Proposition~\ref{prop.perforated}, the map $\jdt$ is a partial
  tableau switching. Let \( X \) be the (unique) fully switched
  tableau obtained by continuing the tableau switch process on
  \( E \). This means that \( X \) is obtained from \( Q \) and also
  from \( Q' \) by tableau switching. By \Cref{cor:switch} (2), we
  obtain \( Q=Q' \). Therefore, the map \( \jdt \) is injective.
\end{proof}

\begin{theorem}\label{thm:2bij}
  The maps
  \[
    \jdt\colon \BFT(\ml) \to \EXQ(\ml)
  \]
  and 
 \[
\Phi\colon \HVT(\lambda) \to \bigcup_{\mu \supseteq \lambda} \left( \SSYT(\mu)\times \EXQ(\ml) \right)
\]
are bijections. Furthermore, both maps are weight-preserving, i.e., 
if \( \jdt(Q)=E \), then \( \wt(Q)=\wt(E) \), and 
if \( \Phi(T) = (T_1,T_2) \), then \( \wt(T) = \wt(T_1)\wt(T_2) \).
\end{theorem}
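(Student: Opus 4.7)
The plan is to establish that $\Phi$ is a weight-preserving injection, upgrade this to a bijection by comparing weighted generating functions, and then deduce the bijectivity of $\jdt$ as a corollary.

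Well-definedness of $\Phi$ as a map into $\bigcup_\mu \SSYT(\mu) \times \EXQ(\ml)$ is handled by Lemma~\ref{lemma.PQ} (which provides the $\SSYT$ and $\BFT$ factors) together with Lemma~\ref{lem:BFT-EXQ} (which promotes the $\BFT$ factor to $\EXQ$ under $\jdt$). Weight-preservation is immediate: Lemma~\ref{lemma.PQ} yields $\wt(T) = \wt(P)\wt(Q)$, and by Proposition~\ref{prop.perforated} the map $\jdt$ is a sequence of switches that merely interchange adjacent entries without altering the index of any $\alpha_r$ or $\beta_s$, so $\wt(Q) = \wt(\jdt(Q))$. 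For injectivity of $\Phi$, I would combine Lemma~\ref{lem:Psi} (injectivity of $\jdt$) with injectivity of $\U_{\L^\infty\A^\infty}$. The latter can be obtained either from a direct crowding-inverse construction analogous to~\cite{PPPS}, or, more economically, by invoking Theorem~\ref{thm.shuff.q} together with the invertibility of $\shuff$ (a consequence of Corollary~\ref{cor:switch}) to reduce the question to the injectivity of the arm-first uncrowding established in~\cite{PPPS}.

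Surjectivity of $\Phi$ then follows from matching weighted cardinalities. By Theorem~\ref{thm:HVT} the domain has total weight $G_\lambda(\bm{x},\bm{\alpha},\bm{\beta})$, while Theorem~\ref{thm.exq} together with the identity $s_\mu(\bm{x}) = \sum_{P \in \SSYT(\mu)} \wt(P)$ yields
\[
G_\lambda(\bm{x},\bm{\alpha},\bm{\beta}) = \sum_{\mu \supseteq \lambda} \sum_{P \in \SSYT(\mu)} \sum_{E \in \EXQ(\ml)} \wt(P)\wt(E),
\]
which is precisely the total weight of the codomain. Since a weight-preserving injection between two collections whose weighted generating functions coincide as formal power series with nonnegative integer coefficients is necessarily a bijection, $\Phi$ is a bijection. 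Bijectivity of $\jdt \colon \BFT(\ml) \to \EXQ(\ml)$ then comes for free: given $E \in \EXQ(\ml)$, pick any $P \in \SSYT(\mu)$ (which exists since $\mu \supseteq \lambda$), use surjectivity of $\Phi$ to obtain $T \in \HVT(\lambda)$ with $\Phi(T) = (P,E)$, and observe that $\U_{\L^\infty\A^\infty}(T) = (P, Q)$ with $Q \in \BFT(\ml)$ must satisfy $\jdt(Q) = E$; injectivity was already secured by Lemma~\ref{lem:Psi}.

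The principal obstacle I anticipate is securing injectivity of $\U_{\L^\infty\A^\infty}$ in the less-studied leg-first order. Rather than writing down a bespoke crowding algorithm, the cleanest resolution routes through Theorem~\ref{thm.shuff.q} to transfer the question to the arm-first setting handled in~\cite{PPPS}; the entire bijectivity argument is then essentially a structural consequence of the enumerative coincidence between Theorems~\ref{thm:HVT} and~\ref{thm.exq}.
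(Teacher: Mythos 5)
Your proof is essentially the same as the paper's: weight-preservation and well-definedness from Lemmas~\ref{lemma.PQ} and~\ref{lem:BFT-EXQ}, injectivity of $\Phi$ from Lemma~\ref{lem:Psi} and injectivity of $\U_{\L^\infty\A^\infty}$, bijectivity by matching weighted generating functions via Theorems~\ref{thm:HVT} and~\ref{thm.exq}, and bijectivity of $\jdt$ deduced from the chain in~\eqref{eq:bijections}. One small correction on your ``principal obstacle'': $\U_{\L^\infty\A^\infty}$ is the \emph{arm-first} composition (words in Definition~\ref{defn.uncrowd} are read right to left), so the paper simply cites \cite[Corollary~3.32]{PPPS} for $\U_{\A^\infty}$, observes that $\U_{\L^\infty}$ is likewise injective (a transpose of the same argument), and factors $\U_{\L^\infty\A^\infty} = \U_{\L^\infty}\circ\U_{\A^\infty}$; routing through Theorem~\ref{thm.shuff.q} would transfer you to $\U_{\A^\infty\L^\infty}$, the leg-first order, which is \emph{not} the one treated in \cite{PPPS}, so that alternative does not actually help.
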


\begin{proof}
  By construction, both maps are weight-preserving. By \Cref{lem:Psi},
  \( \jdt \) is injective. In \cite[Corollary~3.32]{PPPS}, it is shown
  that \( \U_{\A^\infty} \) is injective. Since \( \U_{\L^\infty} \)
  is also injective, so is
  \( \U_{\L^\infty\A^\infty} = \U_{\L^\infty}\circ \U_{\A^\infty} \).
  Therefore,
  \( \Phi = (\mathrm{id}\times \jdt)\circ \U_{\L^\infty\A^\infty} \)
  is also injective. Observe that, by the construction of \( \Phi \),
  if \( \Phi(T) = (P,E) \), then \( \wt(T) = \wt(P)\wt(E) \). This
  implies that
  \begin{equation}\label{eq:3}
    \sum_{T\in\HVT(\lambda)} \wt(T)
    \preccurlyeq \sum_{\mu \supseteq \lambda}
    \sum_{P\in \SSYT(\mu)}  \wt(P)
    \sum_{E\in  \EXQ(\ml)} \wt(E),
  \end{equation}
  where \( A \preccurlyeq B \) means that \( B-A \) is a formal power
  series with nonnegative coefficients. By \Cref{thm:HVT} and
  \Cref{thm.exq}, both sides of \eqref{eq:3} are equal. This implies
  that \( \Phi \) is a bijection. Then, considering the maps in
  \eqref{eq:bijections}, we see that \( \jdt \) is a surjection, hence
  a bijection.
\end{proof}

\begin{example}\label{eg.bij}
The map \( \jdt \) sends the biflagged tableaux in Example~\ref{eg.biflagged} to
  the exquisite tableaux in \Cref{exa:1} in that order.
\end{example}

As a corollary, we now have a characterization of the image of the
uncrowding algorithm.

\begin{corollary}
\label{corollary.image}
  The uncrowding map
  \[
    \U_{\L^\infty\A^\infty}\colon \HVT(\lambda) \to \bigcup_{\mu \supseteq \lambda} \left( \SSYT(\mu)\times \BFT(\ml) \right)
  \]
  is a bijection. Furthermore, it is weight-preserving, i.e., if
  \( \U_{\L^\infty\A^\infty}(T) = (T_1,T_2) \), then
  \( \wt(T) = \wt(T_1)\wt(T_2) \).
\end{corollary}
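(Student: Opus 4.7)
The plan is to deduce this corollary as a direct consequence of Theorem~\ref{thm:2bij}, since all the essential work has already been done in setting up the decomposition of $\Phi$. Recall that $\Phi$ was defined as the composition
\[
\HVT(\lambda) \xrightarrow{\U_{\L^\infty\A^\infty}} \bigcup_{\mu \supseteq \lambda} \left( \SSYT(\mu) \times \BFT(\ml) \right) \xrightarrow{\mathrm{id} \times \jdt} \bigcup_{\mu \supseteq \lambda} \left( \SSYT(\mu) \times \EXQ(\ml) \right),
\]
where the codomain of the first arrow is justified by Lemma~\ref{lemma.PQ} (which guarantees that the recording tableau lies in $\BFT(\ml)$).

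First I would observe that $\jdt \colon \BFT(\ml) \to \EXQ(\ml)$ is a bijection by Theorem~\ref{thm:2bij}, hence $\mathrm{id} \times \jdt$ is a bijection on each shape component and therefore on the disjoint union as well. Since Theorem~\ref{thm:2bij} also states that $\Phi$ is a bijection, and $\Phi = (\mathrm{id} \times \jdt) \circ \U_{\L^\infty\A^\infty}$, I can conclude that
\[
\U_{\L^\infty\A^\infty} = (\mathrm{id} \times \jdt)^{-1} \circ \Phi
\]
is a composition of two bijections, hence a bijection.

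For the weight-preserving claim, let $T \in \HVT(\lambda)$ and write $\U_{\L^\infty\A^\infty}(T) = (T_1, T_2)$, so that $\Phi(T) = (T_1, \jdt(T_2))$. By the weight-preserving part of Theorem~\ref{thm:2bij} applied to $\Phi$ we have $\wt(T) = \wt(T_1)\wt(\jdt(T_2))$, and applied to $\jdt$ we have $\wt(\jdt(T_2)) = \wt(T_2)$. Combining yields $\wt(T) = \wt(T_1)\wt(T_2)$.

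There is essentially no obstacle at this stage: the statement is a formal corollary of the fact that a composition $g \circ f$ is bijective and $g$ is bijective, so $f$ is bijective. All the nontrivial content—injectivity of $\U_{\L^\infty\A^\infty}$ from \cite[Corollary~3.32]{PPPS}, the character identity matching Theorem~\ref{thm:HVT} and Theorem~\ref{thm.exq}, and the bijectivity of $\jdt$ via Lemma~\ref{lem:Psi} and the switching arguments—has already been absorbed into the proof of Theorem~\ref{thm:2bij}.
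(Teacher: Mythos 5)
Your proof is correct and takes essentially the same approach as the paper: both deduce bijectivity of $\U_{\L^\infty\A^\infty}$ by factoring $\Phi = (\mathrm{id}\times\jdt)\circ\U_{\L^\infty\A^\infty}$ and invoking Theorem~\ref{thm:2bij} for the bijectivity of $\Phi$ and $\mathrm{id}\times\jdt$. The only cosmetic difference is in the weight-preservation step, where the paper cites Lemma~\ref{lemma.PQ} directly while you derive it from the weight-preservation of $\Phi$ and $\jdt$ stated in Theorem~\ref{thm:2bij}; the two routes are trivially equivalent.
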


\begin{proof}
  By \Cref{lemma.PQ}, \( \U_{\L^\infty\A^\infty} \) is weight-preserving.
  By \Cref{thm:2bij}, the map \( \Phi \), which is the composition of
  the two maps in \eqref{eq:bijections}, is a bijection. By
  \Cref{thm:2bij}, \( \mathrm{id}\times \jdt \) is a bijection, hence
  the other map \( \U_{\L^\infty\A^\infty} \) is also a bijection.
\end{proof}

\begin{example}
In \cite[Remark~3.28]{PPPS}, it is shown that
\[
\ytableausetup{notabloids,boxsize=1.6em}
(P_1,Q_1) = 
\left(
\raisebox{0.2em}{
\begin{ytableau}
  3 \\
  \substack{2\\1} & \substack{3\\2}
\end{ytableau}\;,
\begin{ytableau}
  *(gray) \\
  *(gray) & \alpha_1
\end{ytableau}\,}\right) \text{ is not in the image of } \U_{\A^\infty}.
\]
In other words, this pair of tableaux cannot be obtained by applying
arm uncrowding on any hook-valued tableau. We can check this using
\Cref{corollary.image}. To do so, suppose that
\( \U_{\A^\infty}(T)=(P_1,Q_1) \) for some hook-valued tableau
\( T \). Continuing the leg uncrowding process on \( P_1 \), we have
\[
  \begin{ytableau}
  3 \\
  \substack{2\\1} & \substack{3\\2}
\end{ytableau}
\xrightarrow{\L}
\begin{ytableau}
  3 & 3\\ 
  \substack{2\\1} & 2
\end{ytableau}
\xrightarrow{\L}
\raisebox{1.6em}{\begin{ytableau}
2\\
2 & 3\\
1 & 2
\end{ytableau}}\;,
\quad \mbox{hence} \quad
\U_{\L^\infty\A^\infty}(T)=(P,Q) =
\left(
\raisebox{1.0em}{\begin{ytableau}
2\\
2 & 3\\
1 & 2
\end{ytableau}}\;,
\raisebox{1.0em}{
  \begin{ytableau}
    \beta_1 \\
    *(gray) & \beta_1\\
    *(gray) & \alpha_1
  \end{ytableau}}
\right).
\]
However, \( Q \) is not a biflagged tableau since
\[
\ytableausetup{notabloids,boxsize=1.6em}
\shuff\left(\,
\raisebox{1em}{\begin{ytableau}
    \beta_1 \\
    *(gray) & \beta_1\\
    *(gray) & \alpha_1
  \end{ytableau}\,}
\right) = 
\raisebox{1em}{\begin{ytableau}
\beta_1\\
*(gray) & \alpha_1 \\
*(gray) & *(orange)\beta_1
\end{ytableau}} \notin \FMT.
\]
This is a contradiction to \Cref{corollary.image}. Therefore, the pair
\( (P_1,Q_1) \) of tableaux is not in the image of $\U_{\A^\infty}$,
which is consistent with the observation in~\cite{PPPS}.
\end{example}

Combining \Cref{thm:HVT}, \Cref{thm.exq}, and \Cref{corollary.image},
we have three different combinatorial descriptions of the refined
canonical stable Grothendieck polynomials.

\begin{corollary}
\label{corollary.grothendieck}
We have
\begin{align*}
G_\lambda(\bm{x}, \bm{\alpha},\bm{\beta}) &= \sum_{H\in \HVT(\lambda)}\wt(H)\\
&= \sum_{\mu \supseteq \lambda}s_\mu(\bm{x}) \sum_{E\in \EXQ(\ml)} \wt(E) \\
&= \sum_{\mu \supseteq \lambda}s_\mu(\bm{x}) \sum_{Q\in \BFT(\ml)} \wt(Q).
\end{align*}
\end{corollary}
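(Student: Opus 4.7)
The plan is to extract all three identities as immediate consequences of results already established in the paper. The first equality is nothing more than a restatement of \Cref{thm:HVT}, which writes $G_\lambda(\bm{x},\bm{\alpha},\bm{\beta})$ directly as the generating function for hook-valued tableaux. The second equality is precisely \Cref{thm.exq} (the rephrasing of Hwang et al.'s Schur expansion using exquisite tableaux). So no additional work is needed for these two.

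For the third equality, I would invoke \Cref{corollary.image}, which supplies a weight-preserving bijection
\[
  \U_{\L^\infty\A^\infty}\colon \HVT(\lambda) \longrightarrow \bigcup_{\mu \supseteq \lambda} \bigl(\SSYT(\mu)\times \BFT(\ml)\bigr),
\]
with $\wt(T) = \wt(P)\wt(Q)$ whenever $\U_{\L^\infty\A^\infty}(T) = (P,Q)$. Summing $\wt$ over the domain and codomain, together with the first equality, gives
\[
  G_\lambda(\bm{x},\bm{\alpha},\bm{\beta}) = \sum_{H\in\HVT(\lambda)}\wt(H) = \sum_{\mu\supseteq\lambda}\Bigl(\sum_{P\in\SSYT(\mu)}\wt(P)\Bigr)\Bigl(\sum_{Q\in\BFT(\ml)}\wt(Q)\Bigr),
\]
and since $\sum_{P\in \SSYT(\mu)} \wt(P) = s_\mu(\bm{x})$ by the definition of the Schur function, the third identity follows.

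As a consistency check, the equality of the second and third sums for each fixed $\mu$ can be read off independently from \Cref{thm:2bij}, where $\jdt\colon \BFT(\ml)\to \EXQ(\ml)$ is shown to be a weight-preserving bijection; this is implicit in the commutativity of \eqref{eq:bijections} but is worth noting for the reader. There is no real obstacle in this corollary: the substantive work has already been carried out in \Cref{thm:HVT}, \Cref{thm.exq}, \Cref{thm:2bij}, and \Cref{corollary.image}, and the only step here is to record the three equivalent combinatorial descriptions that these results together produce.
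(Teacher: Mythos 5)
Your proposal is correct and matches the paper's argument, which simply records the corollary as an immediate consequence of \Cref{thm:HVT}, \Cref{thm.exq}, and \Cref{corollary.image}. The only difference is that you spell out the summation step for the third equality and note the alternative derivation via \Cref{thm:2bij}, which the paper leaves implicit.
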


\section{Further Study}
\label{sec:further-study}

In this section, we propose some open problems.
Recall that in \Cref{subsec.image}, we have proved that the map \( \Phi \) defined in
\Cref{def:Phi} is a bijection using \Cref{thm:HVT} and \Cref{thm.exq}.
It would be interesting to find a combinatorial proof without using
these theorems.

\begin{problem}
  Find a combinatorial proof of the bijectivity of the map \( \Phi \).
\end{problem}

Let \( G_\mu(\bm{x},\bm{\beta}) \) be the symmetric Grothendieck polynomial
\( G_\mu(\bm{x},\bm{\alpha},\bm{\beta}) \) with substitution
\( \alpha_i=0 \) for all \( i \). Iwao et
al.~\cite[Cor~4.16]{Iwao2024} showed that\footnote{Their Grothendieck
  polynomial is \( G_\lambda(\bm{x}, \bm{\alpha},\bm{\beta}) \) with
  \( \alpha_i \) replaced by \( -\alpha_i \). There are some sign
  errors in \cite{Iwao2024}.} if
\begin{equation}\label{eq:4}
  G_\lambda(\bm{x}, \bm{\alpha},\bm{\beta})
  = \sum_{\mu\supseteq \lambda} B_{\lambda}^\mu G_\mu(\bm{x},\bm{\beta}),
\end{equation}
then \( B_{\lambda}^\mu\in \ZZ_{\ge0}[\bm{\alpha},-\bm{\beta}] \).
There is also a combinatorial description for \( B_{\lambda}^\mu \);
see \cite[Remark 4.17]{Iwao2024}. It would be interesting to prove
this using variations of uncrowding algorithms. Note that if we apply
\( \U_{\A^\infty} \) to a hook-valued tableau \( H \), then we obtain
a pair \( (S,E) \) of a set-valued tableau \( S \) and a
column-flagged increasing tableau \( E \). However, the exact
description of the image \( \U_{\A^\infty}(\HVT(\lambda)) \) is
unknown.

\begin{problem}
  Find a combinatorial proof of the expansion \eqref{eq:4}.
\end{problem}

There is a parallel story on dual Grothendieck polynomials; see
\cite{LP.2007, GGL2016} and references therein. Hwang et
al.~\cite[Definition~1.2]{HJKSS1} introduced refined dual canonical
stable Grothendieck polynomials
\( g_\lambda(\bm{x}, \bm{\alpha},\bm{\beta}) \), which are dual to
\( G_\lambda(\bm{x}, \bm{\alpha},\bm{\beta}) \) with respect to the
Hall inner product. They found two combinatorial descriptions for
\( g_\lambda(\bm{x}, \bm{\alpha},\bm{\beta}) \).

\begin{problem}
  Find a combinatorial proof
  of the equivalence of the two models
  for \( g_\lambda(\bm{x}, \bm{\alpha},\bm{\beta}) \)
  in \cite[Theorem~1.6 and Corollary 4.6]{HJKSS1}.
\end{problem}

\bibliographystyle{alpha}
\bibliography{main.bib}

\end{document}